\newtheorem{theorem}{Theorem}[section]
\newtheorem{thm}{Theorem}[section]
\newtheorem{prop}[theorem]{Proposition}
\theoremstyle{definition}
\newtheorem{defn}[theorem]{Definition}
\newtheorem{cor}[theorem]{Corollary}
\theoremstyle{remark}
\newtheorem{remark}[theorem]{Remark}
\numberwithin{equation}{section}
\newcommand\EE{\mathbb E}
\newcommand\NN{\mathbb N}
\newcommand\RR{\mathbb R}
\newcommand\R{\mathbb R}
\newcommand\ZZ{\mathbb Z}
\newcommand\Z{\mathbb Z}
\newcommand\FF{\mathbb F}
\newcommand\cB{\mathfrak{B}}
\newcommand\cI{\mathcal{I}}
\newcommand\cS{\mathcal{S}}
\newcommand\cO{\mathcal{O}}
\newcommand\cR{\mathcal{R}}
\newcommand\abs[1]{\left|#1\right|}
\newcommand\inn[1]{\left\langle #1 \right\rangle}
\newcommand\set[1]{\left\{{#1}\right\}}
\def\cc{\curvearrowright}
\def\fh{{\mathfrak{v}}}
\def\sA{{\mathbb{A}}}
\def\sM{{\mathbb{M}}}
\def\MM{{\mathbb{M}}}
\def\M{{\mathbb{M}}}
\def\II{{\mathbb{I}}}
\def\A{{\mathbb{A}}}
\def\Aut{{\textrm{Aut}}}
\def\bnu{\bar{\nu}}
\def\RS{{\textrm{RS}}}
\def\SRS{{\textrm{SRS}}}
\begin{document}
\title{Amenable equivalence relations and the construction of ergodic averages for group actions}

\author{Lewis Bowen\footnote{supported in part by NSF grant DMS-0968762, NSF CAREER Award DMS-0954606 and BSF grant 2008274}  ~and Amos Nevo\footnote{supported in part ISF grant  and BSF grant 2008274}}
%






\maketitle

\begin{abstract}
We present a general new method for constructing pointwise ergodic sequences on countable groups, which is applicable to amenable as well as to non-amenable groups and treats both cases on an equal footing.  The principle underlying the method is that  both cases can be viewed as instances of the general ergodic theory of amenable equivalence relations. 
 \end{abstract}


\section{Introduction}

\subsection{Pointwise ergodic families}

Let $\Gamma$ be a countable group with a probability-measure-preserving (pmp)  action on a standard probability space $(X,\mu)$. Any probability measure $\zeta$ on $\Gamma$ determines an operator on $L^1(X,\mu)$ defined by
$$\pi_X(\zeta)(f):= \sum_{\gamma\in \Gamma} \zeta(\gamma) f\circ \gamma^{-1},\quad \forall f\in L^1(X,\mu).$$

\begin{defn}
Let $\II$ denote either $\RR_{>0}$ or $\NN$. Suppose  $\{\zeta_r\}_{r\in \II}$ is a family of probability measures on $\Gamma$. 
\begin{enumerate}\item  If for every pmp action $\Gamma \cc (X,\mu)$ and every $f\in L^p(X,\mu)$ the functions $ \pi_X(\zeta_r)(f)$ converge pointwise a.e. as $ r\to \infty$   then we say $\{\zeta_r\}_{r\in \II}$ is a {\em pointwise convergent family in $L^p$}. If, in addition, the a.e.-pointwise limit of $\pi_X(\zeta_r)(f)$ is the conditional expectation of $f$ on the $\sigma$-algebra of $\Gamma$-invariant Borel sets then $\{\zeta_r\}_{r\in \II}$ is a {\em pointwise ergodic family in $L^p$}. 
\item We say that $\{\zeta_r\}_{r\in \II}$ {\em satisfies the strong $L^p$ maximal inequality} if there is a constant $C_p>0$ depending only on $\{\zeta_r\}_{r\in \II}$ such that for every $f \in L^p(X,\mu)$, $\|\sM[f|\zeta]\|_p \le C_p \|f\|_p$ where $\sM[f|\zeta] = \sup_{r \in \II} \pi_X(\zeta_r)(|f|)$. 
\item Similarly,  $\{\zeta_r\}_{r\in\II}$ is said to {\em satisfy the weak $(1,1)$-type maximal inequality} if there is a constant $C_{1,1}>0$ depending only on $\{\zeta_r\}_{r\in \II}$ such that for every $f \in L^1(X,\mu)$ and $t>0$,
$$\mu(\{ x\in X:~ \M[f|\zeta](x)\ge t\}) \le \frac{ C_{1,1} \|f\|_1}{t}.$$
\end{enumerate}
It follows from standard interpolation arguments that if $\{\zeta_r\}_{r\in \II}$ satisfies the weak (1,1)-type maximal inequality then it also satisfies the strong $L^p$ maximal inequality for all $p>1$. Moreover in that case it satisfies the {\em $L\log(L)$-maximal inequality} which means there is a constant $C_1>0$ depending only on $\{\zeta_r\}_{r\in \II}$ such that for every $f \in L\log(L)(X,\mu)$, $\|\sM[f|\zeta]\|_{1} \le C_1 \|f\|_{L\log L}$ where, for any function $\phi$ of $X$, $\| \phi \|_{L\log L} := \int_X |\phi| \log^+ |\phi |~d\mu$. 
\end{defn}

 Traditionally, since the time of von-Neumann and Birkhoff, much of the effort in ergodic theory has been devoted to the study of ergodic actions of Abelian and more generally amenable groups (see  \cite{OW87}), with the averages studied most often being uniform averages on an asymptotically invariant sequence of sets (a F\o lner sequence).  For more on the subject of  ergodic theorems on amenable groups we refer to \cite{We03},  the survey \cite{Ne05}, and for a more detailed exposition to \cite{AAB}. We also refer to \cite{Av} for an ergodic theorem pertaining to cross sections of actions of amenable groups.

Our main purpose in the present paper is to give a general new  method for constructing probability measures $\zeta_r$ on $\Gamma$ which form pointwise convergent and pointwise ergodic families on general countable groups, which may be amenable or non-amenable.  The basic ingredient underlying our approach is the realization that one can utilize the amenable actions of a group  in order to construct families of ergodic averages on it. The generality of this approach is underscored by the fact that for any countable group $\Gamma$, the Poisson boundary associated with a generating probability measure is an amenable ergodic action of $\Gamma$ \cite{Z2}, 
 so that any countable group admits such an action.


 Some instances of our approach were developed in \cite{BN1} and in \cite{BN2}, and in the present paper, we greatly generalize the constructions used there. We show that  pointwise ergodic families on $\Gamma$ can be obtained using  $\Gamma$-valued cocycles defined  on a pmp amenable  equivalence relation, subject to certain natural necessary conditions. 
 This construction gives rise to a variety of ergodic families on countable groups, some of which will be discussed briefly in \S \ref{sec:example}.  To further demonstrate the significance and utility of our approach we note that  in \cite{BN4}, the main results of the present paper are crucial  to the proof establishing pointwise convergence for geometric averages in Gromov hyperbolic groups, supported in spherical shells. In \cite{BN5}, the methods and results of the present paper are applied to prove pointwise ergodic theorems for Markov groups. More details on these applications will be given in \S 6 below, when the necessary notation and definitions are available. 

\subsection{From amenable groups to amenable equivalence 
relations}
 
We proceed to outline our general method for constructing pointwise ergodic sequences for pmp actions of countable groups. Along the way, we will briefly state the main results of the present paper and place them in their natural context.  




\subsubsection{Pointwise ergodic theorems for measured equivalence relations} The concept of pointwise ergodic sequence can be generalized from groups to measured equivalence relations. This generalization is very useful,  since it is easy to obtain such sequences for amenable measured equivalence relations,  and it is then possible to push them forward to groups via cocycles. Before explaining this connection, we explain here how to generalize ergodic theorems to measured equivalence relations. Let then $(B,\nu)$ be a standard probability space and $\cR \subset B \times B$ be a discrete measurable equivalence relation. We assume $\nu$ is $\cR$-invariant: if $c$ denotes counting measure on $B$ then $c \times \nu|_\cR = \nu \times c|_\cR$. 

Let  $\Omega=\{\omega_i\}_{i\in \II}$ be a measurable leafwise collection of probability measures $\omega_i:\cR \to [0,1]$. More precisely, we require that the map $(i, (b,c)) \in \II \times \cR \to \omega_i(b,c) \in [0,1]$ is measurable and for a.e. $b\in B$, $\sum_{c \in B} \omega_i(c,b)=1$ where, for convenience, we set $\omega_i(c,b)=0$ if $(c,b) \notin \cR$. 

Let $(X,\mu)$ be another standard probability space and $\alpha:\cR \to \Aut(X,\mu)$ a measurable cocycle, namely $\alpha(a,b)\alpha(b,c)=\alpha(a,c)$ holds for a.e. $a$ and every $b,c$ with $a\cR b \cR c$. We think of $\cR$ as analogous to a group, $\Omega$ as analogous to a family of probability measures on a group and $\alpha$ as analogous to a group action. Given a measurable function $F$ on $B\times X$, define the averages $\A[F|\omega_i]$ on $B\times X$ by
$$\A[F|\omega_i](b,x) = \sum_{c \in B} \omega_i(c,b) F(c, \alpha(c,b)^{-1}x).$$
Let $\cR_\alpha$ be the skew-product relation on $\cR \times X$ defined by $(b,x)\cR_\alpha (c,y) \Leftrightarrow b\cR c$ and $\alpha(c,b)x=y$. We say that $\Omega$ is a pointwise ergodic family in $L^p$ if for every $F \in L^p(B\times X)$, $\A[F|\omega_i]$  converges pointwise a.e. as $i\to\infty$ to $\EE[F | \cR_\alpha]$, the conditional expectation of $F$ on the sigma-algebra of $\cR_\alpha$-saturated sets (i.e. those measurable sets which are unions of $\cR_\alpha$-classes modulo null sets). 


\subsubsection{Finding pointwise ergodic families} One natural problem that arises is how to find collections $\Omega$ as above. In case $\cR$ is amenable, several criteria for establishing that a given $\Omega$ is a pointwise ergodic family were developed in \cite{BN2}. These criteria are based on a generalization of the notions of doubling, regular, and tempered F\o lner sets in the group to F\o lner subset functions on the equivalence relation. In the case of tempered subset function, for example, \cite{BN2} generalizes  Lindenstrauss' pointwise ergodic theorem for tempered Folner sequences on an amenable group \cite{Li01}. 
The fact that such an $\Omega$ always exists follows from the fact that $\cR$ can be realized as the orbit-equivalence relation for a $\ZZ$-action \cite{CFW}. It should be noted, however, that this is an abstract existence result and finding an explicit system of probability measures $\Omega$ depends on having an explicit presentation for the relation $\cR$.  


\subsubsection{Using cocycles to obtain pointwise ergodic theorems}

Suppose now that we have a measured equivalence relation $\cR$ as above with a pointwise ergodic family $\Omega$. Given a measurable cocycle $\alpha:\cR \to \Gamma$ (where $\Gamma$ is a countable group) we can push $\Omega$ forward to obtain a family of measures on $\Gamma$. But before doing so, let us consider a measure-preserving action of $\Gamma$ on a standard probability space $(X,\mu)$ given by a homomorphism $\beta:\Gamma \to \Aut(X,\mu)$. Then the composition $\beta\alpha:\cR \to \Aut(X,\mu)$ is a cocycle. If $f$ is a function on $X$ and $F$ is defined on $B\times X$ by $F(b,x)=f(x)$ then
$$\int \A[F|\omega_i](b,x)~d\nu(b) = \int \sum_{c \in B} \omega_i(c,b) f(\beta(\alpha(c,b))^{-1}x)~d\nu(b) = \sum_{\gamma \in \Gamma} \omega^\alpha_i(\gamma) f(\beta(\gamma)^{-1} x)$$
where $\omega^\alpha_i(\gamma) =  \int \sum_{c \in B:~\alpha(c,b)=\gamma} \omega_i(c,b) ~d\nu(b)$. From here we would like to conclude that $\pi_X(\omega^\alpha_i)(f)$ converges pointwise a.e. as $i\to\infty$ to $\int \EE[F | \cR_{\beta\alpha}](b,\cdot)~d\nu(b)$. In Theorem \ref{thm:general} below, we obtain this conclusion if $\Omega$ satisfies a strong $L^p$-maximal inequality which allows us to interchange the integral with the limit, $\Omega$ is pointwise ergodic in $L^p$ and $f \in L^p(X)$.

While this shows that $\{\omega^\alpha_i\}_{i\in \II}$ is a pointwise convergent family of measures on $\Gamma$, it does not establish that is pointwise ergodic, because we do not know apriori whether the limit $\int \EE[F| \cR_{\beta\alpha}](b,\cdot)~d\nu(b)$ coincides with the conditional expectation of $f$ on the sigma-algebra of $\Gamma$-invariant measurable sets. We say that the cocycle $\alpha:\cR \to \Gamma$ is {\em weakly mixing} if whenever $\beta:\Gamma \to \Aut(X,\mu)$ is an ergodic action then the skew-product relation $\cR_{\beta\alpha}$ on $B\times X$ is also ergodic. In this case, we can conclude that $\EE[F| \cR_{\beta\alpha}](b,x)= \EE[f|\Gamma](x)$ almost everywhere and therefore $\{\omega^\alpha_i\}_{i\in \II}$ is a pointwise ergodic family as desired. Thus our goal now is to find, for a given group $\Gamma$, a weakly mixing cocycle $\alpha$ from an amenable pmp equivalence relation $\cR$ into $\Gamma$ together an appropriate family $\Omega$ of leafwise measures on $\cR$. As we will see, the weakly mixing criterion can be  weakened somewhat, a fact that will be important below.


\subsubsection{Equivalence relations from amenable actions: the $II_1$ case} Given a group $\Gamma$, we can construct an amenable pmp equivalence relation $\cR$ with a cocycle $\alpha:\cR \to \Gamma$ from an essentially free amenable action of $\Gamma$. The simplest case occurs when $\Gamma$ admits an essentially free measure-preserving amenable action on a standard probability space $(B,\nu)$. Let $\cR=\{(b,gb):~g\in \Gamma\}$ be the orbit-equivalence relation and $\alpha:\cR \to \Gamma$ be the cocycle $\alpha(gb,b)=g$ which is well-defined a.e. because the action is essentially free. If the action is weakly mixing then $\alpha$ is also weakly mixing.  Of course, if $\Gamma$ admits such an action then it must be an amenable group and we can construct the family $\Omega$ from a regular or  tempered F\o lner sequence of $\Gamma$. From this construction and \cite[Theorem 2.5]{BN2}, we deduce  the pointwise ergodic theorem for regular or tempered F\o lner sequences, in the latter case using an argument modelled after B. Weiss' proof of Lindenstrauss' Theorem \cite{We03}. 

\subsubsection{The $II_\infty$ case} The next simplest case occurs when $\Gamma$ admits an essentially free measure-preserving amenable action on a sigma-finite (but infinite) measure space $(B',\nu)$. In this case, let $B \subset B'$ be a measurable subset with $\nu(B)=1$. Let $\cR=\{(b,gb):~g\in \Gamma, b \in B\} \cap (B \times B)$ be the orbit-equivalence relation and $\alpha:\cR \to \Gamma$ be the cocycle $\alpha(gb,b)=g$ which is well-defined a.e. because the action is essentially free. If the action is weakly mixing then $\alpha$ is also weakly mixing. Because the action is amenable, $\cR$ is amenable and so pointwise ergodic families $\Omega$ of $\cR$ do exist although we do not have an explicit general recipe for finding them. 

\subsubsection{The $III$-case and the ratio set} Perhaps the most important case occurs when $\Gamma \cc (\bar{B},\bar{\nu})$ is an essentially free amenable action but $\bar{\nu}$ is not equivalent to an invariant $\sigma$-finite measure. In this case we form the Maharam extension which is an amenable measure-preserving action on a sigma-finite measure space. The details of this construction depend very much on the essential range (also known as the ratio set) of the Radon-Nikodym derivative. To be precise, we say that a real number $t\ge 0$ is in the ratio set if for every $\epsilon>0$ and every subset $A \subset \bar{B}$ with positive measure there exists a subset $A' \subset A$ with positive measure and an element $g \in \Gamma \setminus \{e\}$ such that $gA' \subset A$ and $|\frac{d\bar{\nu}\circ g}{d\bar{\nu}}(a) -t| < \epsilon$ for every $a\in A'$. A similar definition holds for stating that $+\infty$ is in the ratio set. The action is said to be of type $II$ if the ratio set is $\{1\}$, it is type $III_1$ if the ratio set is $[0,\infty]$, it is type $III_\lambda$ if the ratio set is $\{\lambda^n:~n\in \Z\}\cup\{0,\infty\}$ (where $\lambda \in (0,1)$) and it is type $III_0$ if the ratio set is $\{0,1,\infty\}$. There are no other possibilities. It is also known that, after replacing $\bar{\nu}$ with an equivalent measure, we may assume that for every $g \in \Gamma$, $\frac{d\bar{\nu} \circ g}{d\bar{\nu}}(b)$ is contained in the ratio set for a.e. $b$.  We say that the action $\Gamma \cc (\bar{B},\bar{\nu})$ has {\em stable type $III_\lambda$} (for some $\lambda \in (0,1)$) if for every ergodic measure-preserving action $\Gamma \cc (X,\mu)$ the ratio set of the product action $\Gamma \cc (\bar{B}\times X, \bar{\nu}\times \mu)$ contains $\{\lambda^n:~n\in \Z\}$ and $\lambda \in (0,1)$ is the largest number with this property. Similarly, the action of $\Gamma$ on $(\bar{B}, \bar{\nu})$ has stable type $III_1$ if the ratio set of the product action is $[0,\infty]$ (for every pmp action $\Gamma \cc (X,\mu)$). For more details, see \S \ref{sec:type}.

\subsubsection{The $III_1$ case} Suppose that $\Gamma \cc (\bar{B},\bar{\nu})$ has type $III_1$.  Let  
$$R(g,b) = \log\left( \frac{d\bar{\nu} \circ g}{d\nu}(b) \right).$$ 
Let $B' = \bar{B} \times \RR$ and $\nu' = \bar{\nu} \times \theta$ where $\theta$ is the measure on $\R$ defined by $d\theta(t)=e^tdt$. The group $\Gamma$ acts on $(B',\nu')$ by $g(b,t)=(gb,t-R(g,b))$. This action preserves the measure $\nu'$, which is infinite. So we can let $B=\bar{B}\times [0,T] \subset B'$ (for some $T>0$), $\nu$ equal to the restriction of $\nu'$ of $B$ and $\cR$ be the orbit-equivalence relation on $B$ given by: $(b,t)\cR(b',t')$ if there exists $g \in \Gamma$ such that $g(b,t)=(b',t')$. Also let $\alpha:\cR \to \Gamma$ be the cocycle $\alpha(gb,b)=g$. If the action $\Gamma \cc (\bar{B},\bar{\nu})$ is weakly mixing and stable type $III_1$ then $\alpha$ is weakly mixing. For example, these conditions are satisfied when $\Gamma$ is an irreducible lattice in a connected semi-simple Lie group  $G$ without compact factors and $\bar{B}$ is the homogeneous space $G/P$ where $P<G$ is a minimal parabolic subgroup (see \cite{BN2} for details). More generally, if $\Gamma \cc (\bar{B},\bar{\nu})$ is weakly mixing, type $III_1$ and stable type $III_\lambda$ for some $\lambda \in (0,1]$ then in Theorem \ref{thm:typeIII} we prove that $\alpha$ is weakly mixing relative to a compact group action on $B$ (Definition \ref{defn:compact}). This slightly weaker condition is enough to obtain a pointwise ergodic family on $\Gamma$ from one on $\cR$ (Theorem \ref{thm:general}).

\subsubsection{The $III_\lambda$ case} Suppose, as above, that $\Gamma \cc (\bar{B},\bar{\nu})$ is an essentially free amenable action and $\bar{\nu}$ is not equivalent to an invariant $\sigma$-finite measure. For $\lambda \in (0,1)$ let 
$$R_\lambda(g,b) = \log_\lambda\left( \frac{d\bar{\nu} \circ g}{d\nu}(g) \right).$$
Suppose that $R_\lambda(g,b) \in \Z$ for every $g\in \Gamma$ and a.e. $b$. In this case, we form the discrete Maharam extension. To be precise, let $B' = \bar{B} \times \Z$ and $\nu'  = \bar{\nu} \times \theta_\lambda$ where $\theta_\lambda(\{n\})=\lambda^{-n}$. Let $\Gamma$ act on $B'$ by $g(b,t)=(g,b+R_\lambda(g,b))$. This action preserves $\nu'$ and we can define $B, \cR$ and $\alpha$ as in the previous case. If $\Gamma \cc (\bar{B},\bar{\nu})$ is weakly mixing and stable type $III_\lambda$ then $\alpha$ is also weakly mixing. More generally, if $\Gamma \cc (\bar{B},\bar{\nu})$ is weakly mixing, type $III_\lambda$ and stable type $III_\tau$ for some $\tau \in (0,1)$ then $\alpha$ is also weakly mixing relative to a certain compact group action (Theorem \ref{previous}) which, as we see from Theorem \ref{thm:general} below, is sufficient for constructing pointwise ergodic families. 

\subsubsection{Amenable actions: examples}
We now have the problem of finding weakly mixing cocycles from amenable equivalence relations into $\Gamma$. Theorem \ref{thm:existence} shows that such cocycles exist for any countable group $\Gamma$. However, it is desirable to have such equivalence relations which arise from actions of $\Gamma$ to determine pointwise ergodic theorems in which the family of measures is associated with some geometric structure on the group. Next we discuss a few possibilities.

The first possibility is to choose $(\bar{B},\bar{\nu})$ to be the Poisson boundary $B(\Gamma,\eta)$ with respect to an admissible probability measure $\eta$ on $\Gamma$. It is known that the action $\Gamma$ on $B(\Gamma,\eta)$ is amenable and weakly mixing (in fact doubly-ergodic \cite{Ka}). However, the type and stable type of the action is not well understood in general. There are some exceptions: for example  in \cite{INO08} it is proven that the Poisson boundary of a random walk on a Gromov hyperbolic group induced by a nondegenerate measure on $\Gamma$ of finite support is never of type $III_0$. 

Another possibility is to choose $(\bar{B},\bar{\nu})$ to be the square of the Poisson boundary $B(\Gamma,\eta)$. Because $\Gamma \cc B(\Gamma,\eta)$ is amenable and doubly-ergodic \cite{Ka}, this action is amenable and weakly mixing. The type and stable type of this action is not well-understood in general but there are many cases in which it is known to be of type $II_\infty$ (this means that there is an equivalent invariant infinite $\sigma$-finite measure).

If the group $\Gamma$ is word hyperbolic then we may consider the action of $\Gamma$ on its boundary with respect to a Patterson-Sullivan measure. It is well-known that this action is amenable but it is not known whether it is necessarily weakly mixing. In \cite{B2} it is shown that the stable ratio set of this action contains a real number $t \notin \{0,1\}$. It follows that if this action is weakly mixing then it has stable type $III_\lambda$ for some $\lambda \in (0,1]$ and therefore, the methods of this paper apply. It is also known that the square of the Patterson-Sullivan measure on the squared boundary $\partial \Gamma \times \partial \Gamma$ is equivalent to an invariant sigma-finite measure. 

If the group $\Gamma$ is an irreducible lattice in a connected semi-simple Lie group $G$ which has trivial center and no compact factors, then we can consider the action of $\Gamma$ on $G/P$ (where $P<G$ is a minimal parabolic subgroup) with respect to a $G$-quasi-invariant measure. In \cite{BN2} it is shown that this action is weakly mixing and has type and stable type $III_1$.

{\bf Organization}.
In \S \ref{sec:mer} we review pointwise ergodic theory for measured equivalence relations. In \S \ref{sec:construct} we prove our first main result, Theorem \ref{thm:general}, which is a general construction of pointwise ergodic families. \S \ref{sec:amenable} reviews type, stable type and the Maharam extension. In \S \ref{sec:typeIII} we prove our second main result, Theorem \ref{thm:typeIII}, which explains how to produce a weakly mixing cocycle relative to a compact group action from a sufficiently nice type $III_1$ action. In \S \ref{sec:example} we illustrate our approach with a variety of examples.

 \section{Measured equivalence relations}\label{sec:mer}
Before discussing the construction of pointwise ergodic families for $\Gamma$-actions,  we must first introduce the notion of a pointwise ergodic family for a measured equivalence relation. Let $(B,\nu)$ be a standard Borel probability space and $\cR \subset B\times B$ a measurable equivalence relation. Recall that this means that there exists a co-null Borel set $B_0\subset B$ such that  the restriction of $\cR$ to $B_0\times B_0$ is a Borel equivalence relation. We assume that  $\cR$ is discrete, namely the equivalence classes are countable, almost always, namely on a conull invariant Borel subset $B_0$ . We assume as well that $\nu$ is $\cR$-invariant, i.e., if $\phi:B\to B$ is a measurable automorphism with graph contained in $\cR$ (when restricted to a conull invariant Borel set)  then $\phi_*\nu=\nu$.

\subsection{Leafwise pointwise ergodic families on $B$.}

Let $\II\in \{\RR_{>0},\NN\}$ be an index set and $\Omega=\{\omega_i\}_{i\in \II}$ a measurable collection of  leafwise probability measures $\omega_i:\cR \to [0,1]$. More precisely, we assume that each $\omega_i$ is defined on a common conull invariant Borel set $B_0$, where the map $(i, (b,c)) \in \II \times \cR \to \omega_i(b,c) \in [0,1]$ is Borel and for  every $b\in B_0$, $\sum_{c \in B_0} \omega_i(c,b)=1$ where, for convenience, we set $\omega_i(c,b)=0$ if $(c,b) \notin \cR$. 


Let $\alpha:\cR \to \Aut(X,\mu)$ be a measurable cocycle into the group of measure-preserving automorphisms of a standard Borel probability space $(X,\mu)$. This means that $\alpha(b,c)\alpha(c,d)=\alpha(b,d)$ for every $b,c,d \in B_0$ with $(b,c), (c,d) \in \cR$, and $B_0$ a conull invariant Borel set. Let $\cR_\alpha$ be the equivalence relation on $B_0\times X$ given by $(b,x)\cR_\alpha (c,\alpha(c,b)x)$ (for $(b,c) \in \cR, x\in X$). Given a Borel function $F_0$ on $B_0\times X$, we define its $\Omega$-averages, $\A[F_0|\omega_i]$ as a Borel function on $B_0\times X$ 
$$\A[F_0|\omega_i](b,x) := \sum_{c\in B_0} \omega_i(c,b) F_0(c, \alpha(c,b)^{-1}x)$$
when this sum is absolutely convergent. We say that $\Omega$ is a {\em pointwise ergodic family in $L^p$} if for every such cocycle $\alpha$, for every $F \in  L^p(B\times X,\nu\times \mu)$, choosing some Borel restriction of $F_0$ to $B_0\times X$, 
$$\lim_{i\to\infty} \A[F_0|\omega_i](b,x) = \EE[F | \cR_\alpha ](b,x)$$
for a.e. $(b,x) \in B\times X$ where $\EE[F| \cR_\alpha]$ denote the conditional expectation of $F$ on the $\sigma$-algebra of $\cR_\alpha$-saturated Borel sets (recall that a set $A \subset B\times X$ is $\cR_\alpha$-saturated if it is a union of $\cR_\alpha$-equivalence classes, up to a null set). Our definition is of course independent of the choice of $F_0$ in the $L^p$-equivalence class of $F$. 

We will also need to make use of leafwise maximal inequalities. For this purpose, let $\MM[F_0| \Omega] : = \sup_{i\in \II} \A[|F_0| |\omega_i]$, where $F_0$ is Borel on $B_0\times X$. We say that $\Omega$ satisfies the {\em strong $L^p$ maximal inequality} if 
$\MM[F_0| \Omega] $ is a measurable function, and there is a constant $C_p>0$ such that $\| \MM[F_0|\Omega] \|_p \le C_p \|F\|_p$ for every $F \in L^p(B\times X,\nu\times \mu)$, and for every cocycle $\alpha$ as above. 
We say that $\Omega$ satisfies the {\em weak (1,1)-type maximal inequality} if  there is a constant $C_{1,1}$ such that for every $t>0$ and $F \in L^1(B\times X,\nu\times\mu)$,
$$\nu\times \mu(\{ (b,x):~ \M[F_0|\Omega](b,x)\ge t\}) \le \frac{ C_{1,1} \|F\|_1}{t}.$$
Our definitions are of course independent of the choice of $F_0$ in the $L^p$-equivalence class of $F$. 

\begin{remark}
In the rest of the paper we will  resort to the standard practice of considering just the space $B$, with the relation $\cR$, the cocycle $\alpha$ and the averaging family $\Omega$ being defined and Borel on some unspecified invariant conull Borel set $B_0$. The  statements and proofs of the results that we discuss should be interpreted as holding on an (unspecified)  conull invariant Borel set, which is abbreviated by saying that the desired property  holds almost always, or almost surely, or essentially.  
\end{remark}

\subsection{Ergodicity of the extended relation : weak mixing.}
Now suppose $\Gamma$ is a countable group, $\cR\subset B\times B$ is a discrete measurable equivalence relation,  and $\alpha:\cR \to \Gamma$ a measurable cocycle (so $\alpha(a,b)\alpha(b,c)=\alpha(a,c)$ almost everywhere). Given a homomorphism $\beta:\Gamma \to \Aut(X,\mu)$ we can consider $\beta\alpha$ as a cocycle into $\Aut(X,\mu)$. Suppose we can construct a measurable leafwise systems of probability measures $\Omega$ on $B$ with good averaging properties, namely such that we can establish maximal or pointwise ergodic theorems for the averaging operators $\A[\cdot|\omega_i]$ defined above. It is then natural to average these operators over $B$, and thus obtain operators defined on functions on $X$, which are given by averaging over probability measures on $\Gamma$. 

This is indeed the path we plan to follow, but note that there is an additional unavoidable difficulty: the equivalence relation $\cR_{\beta\alpha}$ is not necessarily ergodic even if $\Gamma \cc^\beta (X,\mu)$ is an ergodic action. Because of this problem, even if we prove pointwise convergence, the limit function might not be invariant. To resolve this problem, we need to make additional assumptions on the cocycle, as explained next.

\begin{defn}[Weakly mixing]
We say that a measurable cocycle $\alpha:\cR \to \Gamma$ is {\em weakly mixing} if for every ergodic pmp action $\Gamma \cc^\beta (X,\mu)$ the induced equivalence relation $\cR_{\beta \alpha}$ on $B\times X$ is ergodic.
\end{defn}

\begin{defn}[Weakly mixing relative to a compact group action]\label{defn:compact}
Let $K$ be a compact group with a nonsingular measurable action $K \cc (B,\nu)$. We say that a measurable cocycle $\alpha:\cR \to \Gamma$ is {\em weakly mixing relative to the action $K\cc (B,\nu)$} if for every pmp action $\Gamma \cc^\beta (X,\mu)$ and every $f \in L^1(X)\subset L^1(B\times X)$, 
$$\int \EE[f| {\cR_{\beta \alpha}}] (kb,x)~dk  = \EE[f | \Gamma](x)$$
for a.e. $(b,x)$ where $dk$ denotes Haar probability measure on $K$, $\EE[f| {\cR_{\beta \alpha}}]$ is the conditional expectation of $f$ (viewed as an element of $L^1(B\times X)$) on the $\sigma$-algebra of $\cR_{\beta \alpha}$-saturated measurable sets and $\EE[f| \Gamma]$ is the conditional expectation of $f$ on the $\sigma$-algebra of $\Gamma$-invariant sets. For example, if $K$ is the trivial group then this condition implies $\alpha$ is weakly mixing.
\end{defn}

\begin{defn}
We say that an action $K \cc (B,\nu)$ as above has {\em uniformly bounded RN-derivatives} if there is a constant $C(K)$ such that 
$$ \frac{d\nu \circ k}{d\nu}(b) \le C(K)\quad \textrm{ for a.e. } (k,b) \in K \times B.$$
\end{defn}

The condition of weakly-mixing relative to a compact group action is both useful and natural and can be verified in practice in many important situations. The reason for that is its close connection to the notion of type of a non-singular group action, as we will explain further below.  

\section{Construction of ergodic averages}\label{sec:construct}

\subsection{Statement of Theorem \ref{thm:general}}

We can now state the main result on the construction of pointwise ergodic families of probability measures on $\Gamma$. It reduces it to the construction of  a system of leafwise probability measures $\Omega$ with good averaging properties on an equivalence relation $\cR$ on $B$, provided we also have a weakly-mixing cocycle $\alpha : \cR \to \Gamma$ (relative to a compact group action).

\begin{thm}\label{thm:general}
Let $(B,\nu,\cR)$ be a measured equivalence relation, $\Omega=\{\omega_i\}_{i\in \II}$ a measurable family of leafwise probability measures $\omega_i:\cR \to [0,1]$ and $\alpha:\cR \to \Gamma$ be a measurable cocycle into a countable group $\Gamma$. Suppose there is a nonsingular compact group action $K \cc (B,\nu)$ with uniformly bounded RN-derivatives and $\psi\in L^q(B,\nu)$ ($1<q<\infty$) is a probability density (so $\psi \ge 0$, $\int \psi~d\nu=1$). For $i \in \II$, define the probability measure $\zeta_i$ on $\Gamma$ by
$$\zeta_i(\gamma):= \int_B \int_K \sum_{c:~\alpha(c,b)=\gamma} \omega_i(c,kb) \psi(b) ~dkd\nu(b).$$
Let $p>1$ be such that $\frac{1}{p}+\frac{1}{q}=1$. 
Then the following hold.
\begin{enumerate}
\item If $\Omega$ satisfies the strong $L^p$ maximal inequality then $\{\zeta_i\}_{i\in\II}$ also satisfies the strong $L^p$ maximal inequality.
\item If in addition, $\Omega$ is a pointwise ergodic family in $L^p$, then $\{\zeta_i\}_{i\in\II}$ is a pointwise convergent family in $L^p$.
\item If in addition, $\alpha$ is weakly mixing relative to the $K$-action, then $\{\zeta_i\}_{i\in \II}$ is a pointwise ergodic family in $L^p$.
\end{enumerate}
Similarly, if $\Omega$ satisfies the weak $(1,1)$-type maximal inequality and $\psi\in L^\infty(B,\nu)$ then $\{\zeta_r\}_{r\in \II}$ satisfies the $L \log (L)$ maximal inequality. If in addition, $\Omega$ is a pointwise ergodic family in $L^1$, then $\{\zeta_i\}_{i\in\II}$ is a pointwise convergent family in $L\log(L)$. If in addition, $\alpha$ is weakly mixing relative to the $K$-action, then $\{\zeta_i\}_{i\in \II}$ is a pointwise ergodic family in $L\log(L)$.
\end{thm}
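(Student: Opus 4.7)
Fix a pmp action $\Gamma \cc^\beta (X,\mu)$ and $f\in L^p(X,\mu)$, and lift $f$ to $F(b,x):= f(x)$ on $B\times X$; since $\nu(B)=1$, $\|F\|_p=\|f\|_p$ and $\|F\|_{L\log L}=\|f\|_{L\log L}$. Unwinding the definition of $\zeta_i$ and exchanging sums with integrals yields the key identity
\begin{equation*}
\pi_X(\zeta_i)(f)(x) \;=\; \int_B \psi(b)\int_K \A[F\mid \omega_i](kb,x)\, dk\, d\nu(b),
\end{equation*}
which recasts the $\Gamma$-operator on $X$ as an integrated leafwise $\cR$-average on $B\times X$ against the measure $\psi(b)\, dk\, d\nu(b)$. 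All four parts of the theorem will be obtained from this identity by (i) H\"older in $b$ with $\psi \in L^q$, (ii) Jensen in $k$ against Haar probability, (iii) the uniform bound $C(K)$ on the Radon--Nikodym derivative to remove the $K$-twist, and (iv) a standard Banach-principle density argument in $X$.

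\textbf{Strong $L^p$-maximal inequality (Part 1).} Taking $\sup_i$ inside the integral gives $\sM[f\mid \zeta](x) \le \int_B \psi(b) \int_K \MM[F\mid \Omega](kb,x)\, dk\, d\nu(b)$. H\"older in $b$ followed by Jensen in $k$ produces
\begin{equation*}
\sM[f\mid \zeta](x)^p \;\le\; \|\psi\|_q^{\,p}\int_B\int_K \MM[F\mid \Omega](kb,x)^p\, dk\, d\nu(b).
\end{equation*}
Integrating in $x$, using Tonelli and the change of variables $kb\mapsto b$ controlled by $C(K)$, and invoking $\|\MM[F\mid\Omega]\|_p\le C_p\|F\|_p$ gives $\|\sM[f\mid\zeta]\|_p \le \|\psi\|_q\, C(K)^{1/p}\, C_p\, \|f\|_p$, proving the strong $L^p$ maximal inequality with an explicit constant.

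\textbf{Pointwise convergence and ergodicity (Parts 2 and 3).} For bounded $f$, $F$ is bounded, so $|\A[F\mid\omega_i]|\le\|f\|_\infty$ uniformly in $i$, and the a.e.\ pointwise convergence supplied by the hypothesis on $\Omega$ passes through both integrals by dominated convergence; here the bounded RN-derivative hypothesis is needed to ensure that the $\nu$-null exceptional set on $B$ lifts to a null set on $K\times B$ under $(k,b)\mapsto kb$. Consequently, for $f \in L^\infty$,
\begin{equation*}
\pi_X(\zeta_i)(f)(x) \;\longrightarrow\; \int_B \psi(b)\int_K \EE[F\mid \cR_{\beta\alpha}](kb,x)\, dk\, d\nu(b) \quad \mu\text{-a.e.}
\end{equation*}
The Banach principle, based on the maximal inequality of Part 1 together with the oscillation bound $\limsup_i \pi_X(\zeta_i)(f) - \liminf_i \pi_X(\zeta_i)(f) \le 2\sM[f-f_n\mid\zeta]$ and density of $L^\infty\subset L^p$, extends a.e.\ convergence to all of $L^p$ (Part 2). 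For Part 3, the hypothesis that $\alpha$ is weakly mixing relative to the $K$-action identifies $\int_K \EE[F\mid\cR_{\beta\alpha}](kb,x)\, dk$ with $\EE[f\mid\Gamma](x)$; since $\int_B \psi\, d\nu = 1$, the limit collapses to $\EE[f\mid\Gamma](x)$.

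\textbf{The $L\log L$ version (Part 4) and main obstacle.} With $\psi\in L^\infty$ no H\"older is needed, and one obtains $\sM[f\mid\zeta](x) \le \|\psi\|_\infty\,C(K)\int_B \MM[F\mid\Omega](b,x)\, d\nu(b)$ directly. The weak $(1,1)$ hypothesis on $\Omega$, combined with the (automatic) strong $L^p$ inequalities for $p>1$, yields --- via Zygmund's truncation-and-interpolation argument applied on the product space $B\times X$ --- the bound $\|\MM[F\mid\Omega]\|_{L^1(\nu\times\mu)} \le C\|F\|_{L\log L}$. Integrating in $x$ gives the $L\log L$ maximal inequality for $\{\zeta_i\}$, after which the pointwise convergence and ergodicity assertions follow by the same Banach-principle argument, now run in the $L\log L$ topology. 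The main technical obstacle throughout is the non-invariance of $\nu$ under the $K$-action: the uniformly bounded RN-derivative hypothesis is the single ingredient that makes the change of variable $kb\mapsto b$ work in every step, and the Zygmund step in Part 4 is delicate because it must be run on the product space $B\times X$ rather than on $X$ alone.
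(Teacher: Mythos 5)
Your proposal is correct and follows essentially the same route as the paper: the identity $\pi_X(\zeta_i)(f)(x)=\int_B\int_K \A[F|\omega_i](kb,x)\psi(b)\,dk\,d\nu(b)$, the H\"older estimate in $\psi$ combined with the change of variables $kb\mapsto b$ controlled by $C(K)$ for the maximal inequality, bounded convergence on $L^\infty$ followed by a density/Banach-principle argument for pointwise convergence, and the relative weak mixing hypothesis to collapse the limit to $\EE[f|\Gamma]$. The only cosmetic differences are that you apply H\"older in $b$ and Jensen in $k$ separately where the paper applies H\"older jointly on $K\times B$, and that you make explicit the standard weak $(1,1)\Rightarrow L\log L$ interpolation step that the paper cites as known.
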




\subsection{Proof of Theorem \ref{thm:general}}
In this section, we prove Theorem \ref{thm:general}, motivated by the proof of \cite[Theorem 4.2]{BN2}.  Let $\Gamma$ be a countable group, $(B,\nu,\cR)$ be a pmp equivalence relation with a leafwise system $\Omega$ of probability measures on the equivalence classes of $\cR$. Let $\alpha:\cR \to \Gamma$ be a measurable cocycle, $K \cc (B,\nu)$ a nonsingular action of a compact group (with uniformly bounded RN-derivatives), and $\psi \in L^q(B,\nu)$ be a probability density. Let $\Gamma \cc^\beta (X,\mu)$ be a pmp action and $f \in L^p(X,\mu)$. For convenience, for $g\in \Gamma$ and $x\in X$ we write $gx=\beta(g)x$. We consider $f$ to also be a function of $B\times X$ defined by $f(b,x)=f(x)$. Observe that by definition of the probability measures $\zeta_i$ given in Theorem \ref{thm:general} : 
\begin{eqnarray}
\pi_X(\zeta_i)(f)(x)&=& \sum_{\gamma \in \Gamma} \zeta_i(\gamma) f(\gamma^{-1} x)\\
&=& \sum_{\gamma \in \Gamma} f(\gamma^{-1} x) \int_B\int_K\sum_{c:~\alpha(c,b)=\gamma} \omega_i(c,kb) \psi(b) ~dkd\nu(b)\\
&=& \int_B\int_K \sum_{c:~(c,b) \in \cR} \omega_i(c,kb) f(c,\alpha(c,kb)^{-1}x)\psi(b) ~dkd\nu(b)\\
&=& \int_B\int_K \A[f | \omega_i](kb,x)\psi(b)~dk d\nu(b). \label{eqn:1}
\end{eqnarray}

Because of this equality, we extend the domain of the operator $\pi_X(\zeta_i)$ from $L^p(X)$ to $L^p(B\times X)$ by setting
\begin{eqnarray}\label{transform}
\Pi(\zeta_i)(F)(x) = \int_B\int_K \A[F| \omega_i](kb,x)\psi(b)~dk d\nu(b)
\end{eqnarray}
for any $F \in L^p(B\times X)$. 
Thus $\Pi(\zeta_i)$ is an operator from $L^p(B\times X)$ to $L^p(X)$, and $\Pi(\zeta_i)=\pi_X(\zeta_i)$ on the space $L^p(X)$, viewed as the subspace of $L^p(B\times X)$ consisting of functions independent of $b\in B$.

Let us also set $\sM[F|\zeta] = \sup_{i\in \II} \Pi(\zeta_i)(|F|)$ for any $F\in L^p(B\times X)$, and begin the proof by establishing maximal inequalities, as follows. 

\begin{thm}\label{thm:maximal}

If $\Omega$ satisfies the strong $L^p$ maximal inequality and $\frac{1}{p} + \frac{1}{q} =1$ then there is a constant $\bar{C}_p>0$ such that for any $F\in L^p(B\times X)$,
$$\| \sM[F|\zeta] \|_p \le \bar{C}_p \| F\|_p.$$
If $\Omega$ satisfies the $L\log(L)$-maximal inequality and $\psi\in L^\infty(B,\nu)$, then there is a constant $\bar{C}_1>0$ such that for any $F\in L\log L(B\times X)$,
$$\| \sM[F|\zeta] \|_{1} \le \bar{C}_1 \|F\|_{L\log L}.$$
\end{thm}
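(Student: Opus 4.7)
The plan is to reduce the maximal inequality for $\{\zeta_i\}$ to the hypothesized leafwise maximal inequality for $\Omega$ by exploiting the representation (\ref{transform}). Since $\A[|F||\omega_i]$ is nonnegative, pushing the supremum inside both integrals gives the pointwise bound
\[
\sM[F|\zeta](x) \;\le\; \int_B \int_K \MM[F|\Omega](kb,x)\,\psi(b)\,dk\,d\nu(b).
\]
This is the key inequality; everything that follows is an application of H\"older and a change of variables.

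Next I would apply H\"older's inequality in the variable $b$ for each fixed $k$ and $x$, with exponents $p$ and $q$, to separate $\MM[F|\Omega](k\cdot,x)$ from $\psi$:
\[
\int_B \MM[F|\Omega](kb,x)\,\psi(b)\,d\nu(b) \;\le\; \bigl\|\MM[F|\Omega](k\cdot,x)\bigr\|_{L^p(B,\nu)}\,\|\psi\|_{L^q(B,\nu)}.
\]
The crucial point is that the uniformly bounded RN-derivative hypothesis implies the $K$-action is bounded on $L^p(B,\nu)$: substituting $b'=kb$ and using $\frac{d\nu\circ k^{-1}}{d\nu}\le C(K)$ yields
\[
\bigl\|\MM[F|\Omega](k\cdot,x)\bigr\|_{L^p(B,\nu)}^p \;\le\; C(K)\,\bigl\|\MM[F|\Omega](\cdot,x)\bigr\|_{L^p(B,\nu)}^p,
\]
with a constant independent of $k$. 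Integrating against Haar measure on $K$ (a probability measure) and pulling the resulting $k$-independent bound out, I obtain
\[
\sM[F|\zeta](x) \;\le\; C(K)^{1/p}\,\|\psi\|_q\,\bigl\|\MM[F|\Omega](\cdot,x)\bigr\|_{L^p(B,\nu)}.
\]

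Now raise both sides to the $p$-th power, integrate over $X$, and use Fubini to recognize $\|\MM[F|\Omega]\|_{L^p(B\times X)}^p$ on the right. Applying the hypothesized strong $L^p$ maximal inequality for $\Omega$ gives
\[
\|\sM[F|\zeta]\|_p \;\le\; C(K)^{1/p}\,\|\psi\|_q\,C_p\,\|F\|_p,
\]
which is the first claim with $\bar C_p := C(K)^{1/p}\|\psi\|_q\,C_p$. For the $L\log L$ statement the argument is identical except that H\"older is used with the pairing $L^1\!\cdot\!L^\infty$: the assumption $\psi\in L^\infty$ lets us pull $\|\psi\|_\infty$ out of the $B$-integral, and the $L^1$-version of the $K$-change of variables costs a factor of $C(K)$ rather than $C(K)^{1/p}$. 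Then the $L\log L$ maximal inequality for $\Omega$ (which, as noted in the introduction, follows from the weak $(1,1)$ hypothesis by standard interpolation) yields $\|\sM[F|\zeta]\|_1 \le C(K)\|\psi\|_\infty C_1 \|F\|_{L\log L}$.

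I do not anticipate a genuine obstacle here: the argument is essentially bookkeeping once one observes that the $K$-average and the $\psi$-average commute with the supremum by nonnegativity, and that the RN-derivative bound makes the $K$-action uniformly bounded on all $L^p$. The only delicate point worth double-checking is the measurability of $\MM[F|\Omega]$ as a function on $B\times X$, but this is built into the standing assumption that $\Omega$ satisfies the strong $L^p$ (resp.\ weak $(1,1)$) maximal inequality, so it is available for free.
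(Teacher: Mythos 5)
Your proposal is correct and follows essentially the same route as the paper's proof: push the supremum inside the $K$- and $B$-integrals by nonnegativity, apply H\"older with exponents $p,q$ to separate $\MM[F|\Omega]$ from $\psi$, use the uniformly bounded RN-derivatives to remove the $K$-translation at the cost of a factor $C(K)$, and finish with the hypothesized leafwise maximal inequality, yielding the same constant $C(K)^{1/p}\|\psi\|_q C_p$ (and $C(K)\|\psi\|_\infty C_1$ in the $L\log L$ case). The only difference is cosmetic ordering of the supremum--H\"older steps.
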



\begin{proof}
Without loss of generality, we may assume $F \ge 0$. Let us first consider the case $p>1$ and $\frac{1}{p}+\frac{1}{q}=1$. Because $\Omega$ satisfies the maximal inequality for functions in $L^p(B\times X)$, there is a constant $C_p>0$ such that $\| \sM[F | \Omega] \|_p \le C_p \|F \|_p$  for every $F\in L^p(B\times X)$. 
By (\ref{transform}) and H\"older's inequality,
\begin{eqnarray*}
\|\sM[F|\zeta]\|_p^p &=& \int_X \left| \sup_{i\in \II} \int_B\int_K \A[F | \omega_i](kb,x)\psi(b)~dk d\nu(b) \right|^p~d\mu(x)\\
&\le & \int_X  \sup_{i\in \II}\left(  \int_B\int_K \A[F | \omega_i](kb,x)^p~dk d\nu(b)\right)\left( \int_B\int_K \psi(b)^q~dkd\nu(b)\right)^{p/q}~d\mu(x)\\
&=& \| \psi\|_q^p \int_X  \sup_{i\in \II} \int_B\int_K \A[F | \omega_i](kb,x)^p~dk d\nu(b)~d\mu(x)\\
&\le&\| \psi\|_q^p  \int_X\int_B\int_K \sM[F | \Omega](kb,x)^p~dk d\nu(b)d\mu(x)\\
&=& \| \psi\|_q^p  \int_X\int_B\int_K \sM[F| \Omega](b,x)^p \frac{d\nu \circ k^{-1}}{d\nu}(b)~dk d\nu(b)d\mu(x)\\
&\le& C(K) \| \psi\|_q^p \int_X\int_B \sM[F | \Omega](b,x)^p ~ d\nu(b)d\mu(x) = C(K) \| \psi\|_q^p \| \sM[F | \Omega] \|_p^p\\
&\le& C(K) \| \psi\|_q^p C_p^p \| F\|^p_p = \bar{C}_p \| F\|_p.
\end{eqnarray*}

As to the $L\log L$ results, let us now suppose $F \in L\log L(B \times X)$ and $\psi \in L^\infty(B)$. We assume there is a constant $C_1>0$ (independent of $F$ and the action $\Gamma \cc (X,\mu)$) such that $\|\sM[F|\Omega]\|_1 \le C_1\|F\|_{L\log L} = C_1 \|F\|_{L\log L}$ for any $F\in L\log L(B\times X)$. The proof that $\| \sM[F | \zeta] \|_1 \le C(K)C_1 \|\psi\|_\infty \|F\|_{L\log L}$ is now similar to the proof of the $p>1$ case above.
\end{proof}

\begin{proof}[Proof of Theorem \ref{thm:general}]
Theorem \ref{thm:maximal} proves the first conclusion, namely the maximal inequalities. To obtain the second conclusion, namely pointwise convergence, without loss of generality we may assume $\Gamma \cc (X,\mu)$ is ergodic. Suppose now that $F \in L^\infty(B\times X)$. The bounded convergence theorem and the assumption that $\Omega$ is a pointwise ergodic family implies that for a.e. $x\in X$,
\begin{eqnarray*}
\lim_{r\to\infty} \Pi( \zeta_r)(F)(x) &=& \lim_{r\to\infty}  \int_B\int_K \sA[F|\omega_r](kb,x)\psi(b)~dkd\nu(b)\\
&=&  \int_B\int_K \lim_{r\to\infty}  \sA[F|\omega_r](kb,x)\psi(b)~dkd\nu(b)\\
&=&   \int_B\int_K   \EE[F|{\cR_{\beta\alpha}}](kb,x)\psi(b)~dkd\nu(b).
\end{eqnarray*}
In particular, $\{\zeta_r\}_{r\in \II}$ is a pointwise convergent sequence in $L^\infty(X) \subset  L^\infty(B\times X)$. 

Suppose now that $F \in L^p(B\times X)$ for some $p>1$ and $\frac{1}{p}+\frac{1}{q}=1$. We will show that for a.e. $x\in X$, 
$$\lim_{r\to\infty} \Pi( \zeta_r)(F)(x) =  \int_B\int_K   \EE[F|{\cR_{\beta\alpha}}](kb,x)\psi(b)~dkd\nu(b).$$
By replacing $F$ with $F - \EE[F|{\cR_{\beta\alpha}}]$ if necessary, we may assume $\EE[F|{\cR_{\beta\alpha}}]=0$. 

Let $\epsilon>0$. Because $L^\infty(B\times X)$ is dense in $L^p(B\times X)$, there exists an element $F' \in L^\infty(B\times X)$ such that $\|F - F'\|_p \le \epsilon$ and $\EE[F'|{\cR_{\beta\alpha}}]=0$. Then for a.e. $x\in X$,
\begin{eqnarray*}
\limsup_{r\to\infty} \left|  \Pi( \zeta_r)(F)(x) \right|&\le & 
\limsup_{r\to\infty}  \left| \Pi(\zeta_r)(F-F')(x)\right| +  \lim_{r\to\infty} \left|\Pi(\zeta_r)(F')(x)\right| \\
&=&\limsup_{r\to\infty}  \left| \Pi(\zeta_r)(F-F')(x)\right| \le  \sM[F - F'|\zeta](x).
\end{eqnarray*}
Thus if $\tilde{F}:=\limsup_{r\to\infty} \left| \Pi(\zeta_r)(F)\right|$, then 
$$\|\tilde{F}\|_p \le \| \sM[F - F'|\zeta] \|_p \le C'_p  \| F-F'\|_p \le  C'_p \epsilon  $$
for some constant $C'_p>0$ by Theorem \ref{thm:maximal}. Since $\epsilon$ is arbitrary, $\|\tilde{F}\|_p = 0$ which implies 
$$\lim_{r\to\infty} \Pi( \zeta_r)(F)(x) = 0= \int_B\int_K   \EE[F|{\cR_{\beta\alpha}}](kb,x)\psi(b)~dkd\nu(b)$$
for a.e. $x$ as required. This proves $\{\zeta_r\}_{r\in \II}$ is a pointwise convergent sequence in $L^p(X)\subset L^p(B\times X)$. 

Finally, to prove the third claim in Theorem  \ref{thm:general}, assume $\alpha$ is weakly mixing relative to the $K$-action. Then  given $f\in L^p(X)$, because $\iint   \EE[f|{\cR_\alpha}](kb,x)\psi(b)~dkd\nu(b) = \EE[f|\Gamma](x)$, $\{\zeta_r\}_{r\in \II}$ is a pointwise ergodic sequence in $L^p$. The proof of the case when $f \in L\log L(X)$ and $\psi \in L^\infty(B)$ is similar.

\end{proof}

\section{Amenable actions}\label{sec:amenable}

As noted already, the notion of weakly-mixing of a cocycle relative to a compact group arises naturally when we consider the type of an amenable action, which arises in our context as the obstruction to the ergodicity of the extended relation.  
We now proceed to define and discuss the fundamental notions of type and stable type of a non-singular action. 

\subsection{The ratio set, type, and stable type of a non-singular action}\label{sec:type}
Let $\Gamma$ be a countable group and $\Gamma \cc (B,\nu)$ a nonsingular action on a standard probability space. The {\em ratio set} of the action, denoted $\RS(\Gamma \cc (B,\nu)) \subset [0,\infty]$, is defined as follows. A real number $r\ge 0$ is in $\RS(\Gamma \cc (B,\nu))$ if and only if for every positive measure set $A \subset B$ and $\epsilon>0$ there is a subset $A' \subset A$ of positive measure and an element $g\in \Gamma \setminus\{e\}$ such that 
\begin{itemize}
\item $gA' \subset A$,
\item $| \frac{d\nu \circ g}{d\nu}(b)-r| < \epsilon$ for every $b \in A'$.
\end{itemize}
The extended real number $+\infty \in \RS(\Gamma \cc (B,\nu))$ if and only if for every positive measure set $A \subset B$ and $n>0$ there is a subset $A' \subset A$ of positive measure and an element $g\in \Gamma \setminus\{e\}$ such that 
\begin{itemize}
\item $gA' \subset A$,
\item $ \frac{d\nu \circ g}{d\nu}(b) > n$ for every $b \in A'$.
\end{itemize}
The ratio set is also called the {\em asymptotic range} or {\em asymptotic ratio set}. By Proposition 8.5 of \cite{FM77}, if the action $\Gamma \cc (B,\nu)$ is ergodic then $\RS(\Gamma \cc (B,\nu))$ is a closed subset of $[0,\infty]$. Moreover, $\RS(\Gamma \cc (B,\nu)) \setminus \{0,\infty\}$ is a multiplicative subgroup of $\RR_{>0}$. Since 
$$\frac{d\nu \circ g^{-1}}{d\nu}(gb) = \left( \frac{d\nu \circ g}{d\nu}(b) \right)^{-1},$$
the number $0$ is in the ratio set if and only if $\infty$ is in the ratio set. So if $\Gamma \cc (B,\nu)$ is ergodic and non-atomic then the possibilities for the ratio set and the corresponding type classification are:
\begin{displaymath}
\begin{array}{c|cc}
\textrm{ ratio set } & \textrm{ type }\\
\hline
\{1\} & II &\\
\{0,1,\infty\} & III_0&\\
\{0,\lambda^n, \infty: ~n\in \ZZ\} & III_\lambda & (0 < \lambda < 1)\\
 \lbrack 0,\infty \rbrack & III_1&
 \end{array}\end{displaymath}
For a very readable review, see \cite{KW91}. There is also an extension to general cocycles taking values in an arbitrary locally compact group in section 8 of \cite{FM77}. 

Observe that if $\Gamma \cc (X,\mu)$ is a pmp action then the ratio set of the product action satisfies $\RS(\Gamma \cc (B\times X, \nu \times \mu)) \subset \RS(\Gamma \cc (B,\nu))$. Therefore, it makes sense to define the {\em stable} ratio set of $\Gamma \cc (B,\nu)$ by $\SRS(\Gamma \cc (B,\nu)) = \cap \RS(\Gamma \cc ( B\times X,\nu \times \mu))$ where the intersection is over all pmp actions $G \cc (X,\mu)$. 

We say that $\Gamma \cc (B,\nu)$ is {\em weakly mixing} if for any ergodic pmp action $\Gamma \cc (X,\mu)$, the product action $\Gamma \cc (B\times X,\nu\times\mu)$ is ergodic. If $\nu$ is also non-atomic then the possibilities for the stable ratio set and the corresponding stable type classification are:
\begin{displaymath}
\begin{array}{c|cc}
\textrm{ stable ratio set } & \textrm{ stable type }\\
\hline
\{1\} & II &\\
\{0,1,\infty\} & III_0&\\
\{0,\lambda^n, \infty: ~n\in \ZZ\} & III_\lambda & (0 < \lambda < 1)\\
 \lbrack 0,\infty \rbrack & III_1&
 \end{array}\end{displaymath}

\subsection{Examples of non-zero stable type}

Let us proceed to give two important examples of these notions. First, consider the case that $\Gamma$ is an irreducible lattice in a connected semisimple Lie group $G$ which has trivial center and no compact factors. Consider the action of  $\Gamma$ on $(G/P,\nu)$, where $P<G$ is a minimal parabolic subgroup and $\nu$ is a probability measure in the unique $G$-invariant measure class. As noted in \cite{BN2}, it is well-known that the action is weakly-mixing, and furthermore, it is shown in \cite{BN2} that the action of $\Gamma$ on $G/P$ is stable type $III_1$. Second, in \cite{B2} 
it is shown that for the action of a non-elementary word-hyperbolic group on its Gromov boundary, under suitable assumptions the Patterson-Sullivan measure gives rise to a weakly-mixing action which is of stable type $III_\lambda$, with $0 < \lambda \le 1$. A curious example occurs by considering the action of the free group $\FF_r$ of rank $r$ on its Gromov boundary with respect to the usual Patterson-Sullivan measure (see \S \ref{sec:free} for details). This action is type $III_\lambda$ and stable type $III_{\lambda^2}$ where $\lambda = (2r-1)^{-1}$. These are the only results on stable type of which we are aware.


By comparison, in \cite{INO08} it is proven that the Poisson boundary of a random walk on a Gromov hyperbolic group induced by a nondegenerate measure on $\Gamma$ of finite support is never of type $III_0$. In \cite{Su78, Su82}, Sullivan proved that the recurrent part of an action of a discrete conformal group on the sphere $\mathbb{S}^d$ relative to the Lebesgue measure is type $III_1$. Spatzier \cite{Sp87} showed that if $\Gamma$ is the fundamental group of a compact connected negatively curved manifold then the action of $\Gamma$ on the sphere at infinity of the universal cover is also of $III_1$. The types of harmonic measures on free groups were computed by Ramagge and Robertson \cite{RR97} and Okayasu \cite{Ok03}. 


\subsection{Maharam Extensions}\label{sec:maharam}

An interesting source of $\sigma$-finite measure-preserving amenable actions is obtained as follows. Let $\Gamma \cc (B,\nu)$ be a non-singular action on a probability space. Let 
$$R(g,b) =\log\left( \frac{ d\nu \circ g}{d\nu}(b)\right).$$
Define a measure $\theta$ on $\RR$ by $d\theta(t) = e^{t} dt$. Then $\Gamma$ acts on $B\times \RR$ by
$$g (b,t) = (gb, t- R(g,b)).$$
This action is called the {\em Maharam extension}. It preserves the $\sigma$-finite measure $\nu\times \theta$. If $\Gamma \cc (B,\nu)$ is amenable then this action is also amenable. 

If the Radon-Nikodym derivative takes values in a proper subgroup of $\RR$ then it is more appropriate to consider the {\em discrete Maharam extension}, which is defined as follows. For $\lambda \in (0,1)$, let 
$$R_\lambda(g,b) =\log_\lambda\left( \frac{ d\nu \circ g}{d\nu}(b)\right).$$
Suppose that $R_\lambda(g,b) \in \ZZ$ for every $g\in \Gamma$ and a.e. $b \in B$. Define a measure $\theta_\lambda$ on $\ZZ$ by $\theta_\lambda(\{n\}) = \lambda^{-n}$. Then $\Gamma$ acts on $B\times \ZZ$ by
$$g (b,t) = (gb, t+ R_\lambda(g,b)).$$
This action is called the {\em discrete Maharam extension}. It preserves the $\sigma$-finite measure $\nu\times \theta_\lambda$. If $\Gamma \cc (B,\nu)$ is amenable then this action is also amenable.

\section{Stable type and weakly-mixing relative to a compact group}\label{sec:typeIII}

In this section, we will demonstrate how to utilize the hypothesis that a non-singular action has a non-zero stable type in order to show that its Maharam extension gives rise to an ergodic pmp equivalence relation which is weakly-mixing relative to a compact (in fact, a circle) group. 

\subsection{
Statement of Theorem \ref{thm:typeIII}}

Suppose $\Gamma \cc (B,\nu)$ is an essentially free, weakly mixing action of stable type $III_1$. In this case, the Maharam extension $\Gamma \cc (B\times \RR, \nu\times \theta)$ is weakly mixing. This fact is implied by \cite[Corollary 4.3]{BN2} which is a straightforward application of \cite[Proposition 8.3, Theorem 8]{FM77}. Thus if $B_0 \subset B\times \RR$ is any Borel set with $\nu\times \theta(B_0)=1$ and $\alpha: B_0 \to \Gamma$ is defined by $\alpha(c,b)=\gamma$ where $\gamma b = c$ then $\alpha$ is weakly mixing and therefore, we can apply Theorem \ref{thm:general}.

Since our goal is to utilize amenable actions of $\Gamma$ to construct ergodic averages on $\Gamma$, it becomes necessary to consider non-singular actions with type $III_1$ and stable type $\tau\in (0,1)$. In that case, there arises naturally a non-singular action which is weakly-mixing relative to a compact group $K$, and we will be able to apply Theorem \ref{thm:general} again. We therefore formulate the following result, which will be proved next.

\begin{thm}\label{thm:typeIII}
Let $\Gamma \cc (B,\nu)$ be a nonsingular essentially free weakly mixing action on a standard probability space of type $III_1$ and stable type $III_\lambda$ for some $\lambda \in (0,1)$. Let $T=-\log(\lambda)$, and $\cR$ be the equivalence relation on $B \times [0,T]$ obtained by restricting the orbit-equivalence relation on the Maharam extension. Thus, $(b,t)\cR(b',t')$ if there exists $\gamma \in \Gamma$ such that $(\gamma b, t- R(\gamma,b)) = (b',t')$. Let $\alpha:\cR \to \Gamma$ be the cocycle $\alpha((b',t'), (b,t))=\gamma$ if $\gamma(b,t)=(b',t')$. Also, let $K =\RR/T\ZZ$ and $K \cc B\times [0,T]$ be the action $(r + T\ZZ)(b,t) = (b,t+r)$ (where $t+r$ is taken modulo $T$). Then $\alpha$ is weakly mixing relative to this $K$-action.
\end{thm}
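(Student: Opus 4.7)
The plan is to reduce the statement to the standard structure theory of the Maharam extension via the associated flow of the product action. Without loss of generality assume $\Gamma \cc (X,\mu)$ is ergodic, so $\EE[f\mid\Gamma](x) = \int f\,d\mu$, and the task becomes to show that the $K$-average of $\EE[f\mid\cR_{\beta\alpha}]$ equals this constant almost everywhere.

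The first step is to identify the $\sigma$-algebra of $\cR_{\beta\alpha}$-saturated subsets of $B\times[0,T]\times X$ with the restriction of the $\Gamma$-invariant $\sigma$-algebra of the full Maharam extension $\Gamma \cc (B\times X \times \RR, \nu \times \mu \times \theta)$, where $\Gamma$ acts diagonally by $\gamma(b,x,s)=(\gamma b,\beta(\gamma)x, s - R(\gamma,b))$. This identification is immediate from the definition of $\cR$ as the restriction of the orbit-equivalence relation of the Maharam action. The second step invokes the stable type hypothesis: the product action $\Gamma \cc (B\times X,\nu\times\mu)$ has ratio set containing $\{\lambda^n\}$ and hence is either of type $III_1$ or of type $III_{\lambda^{1/m}}$ for some integer $m\ge 1$. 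Setting $T' = T/m$ (or $T' = 0$ in the $III_1$ case), the Feldman--Moore associated-flow theorem \cite[Proposition 8.3, Theorem 8]{FM77} provides a Borel map $\psi : B\times X \to \RR/T'\ZZ$ satisfying the cocycle equation $\psi(\gamma b,\beta(\gamma) x) = \psi(b,x) + R(\gamma,b) \bmod T'$, normalized so that $\psi_*(\nu\times\mu)$ is Haar measure on $\RR/T'\ZZ$. The map $\phi(b,x,s) := \psi(b,x) + s \bmod T'$ is then a $\Gamma$-invariant, $\RR$-equivariant generator of the $\Gamma$-invariant $\sigma$-algebra of the Maharam extension.

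Combining these gives $\EE[f\mid\cR_{\beta\alpha}](b,s,x) = h(\phi(b,x,s))$ for some $h\in L^1(\RR/T'\ZZ)$. Since $T$ is an integer multiple of $T'$ and $\phi$ is $\RR$-equivariant in the $s$-coordinate, the $K$-average $\int_K \EE[f\mid\cR_{\beta\alpha}](k(b,t),x)\,dk$ collapses by a change of variables to the constant $\tfrac{1}{T'}\int_0^{T'} h(u)\,du$, independent of $(b,t,x)$. A short computation using the Haar-ness of $\psi_*(\nu\times\mu)$ shows that $\phi_*(\nu\times\mu\times\theta|_{[0,T]})$ is a constant multiple of Lebesgue on $\RR/T'\ZZ$, and combined with the defining identity $\int h\,d(\phi_*(\nu\times\mu\times\theta|_{[0,T]})) = (e^T-1)\int f\,d\mu$ this identifies the constant with $\int f\,d\mu$, as required. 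The $III_1$ case ($T'=0$) is subsumed and simpler: the Maharam extension of the product is already ergodic and $\EE[f\mid\cR_{\beta\alpha}]$ is the constant $\int f\,d\mu$.

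The main obstacle is the second step, namely obtaining the Krieger cocycle $\psi$ on the product $B\times X$ with Haar pushforward. This is where the type $III_1$ nature of the base action (which renders its own Maharam extension ergodic, with no non-trivial associated flow) must be reconciled with the stable type $III_\lambda$ hypothesis, which forces a hidden $\RR/T\ZZ$-symmetry to emerge in every pmp product extension. The overall structure parallels the proof of Theorem \ref{previous} and \cite[Corollary 4.3]{BN2}, with the compact group $K=\RR/T\ZZ$ precisely absorbing the discrepancy between the type of the base and its stable type.
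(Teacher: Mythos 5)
Your overall architecture is the same as the paper's: pass to the associated flow (Mackey range) of the product action $\Gamma \cc (B\times X,\nu\times\mu)$, obtain a $\Gamma$-invariant, $\RR$-equivariant map $\phi$ to $\RR/T_0\ZZ$ whose level sets carry the ergodic decomposition of $\nu\times\mu\times\theta_I$ for the restricted relation, and note that averaging over $K=\RR/T\ZZ$ (with $T$ an integer multiple of $T_0$) converts $\int f\,d\eta_{\phi(b,x,t)}$ into an integral of the $\eta_z$ against Haar measure. This is precisely Proposition \ref{prop:erg} and its use in the paper's proof.

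There is, however, a genuine gap at exactly the point you flag as ``the main obstacle'': you assert that the transfer function $\psi$ can be ``normalized so that $\psi_*(\nu\times\mu)$ is Haar measure.'' This is not a normalization. Since the product action is ergodic, any two solutions of $\psi(\gamma b,\beta(\gamma)x)=\psi(b,x)+R(\gamma,b) \bmod T'$ differ by a rotation (their difference is $\Gamma$-invariant, hence constant), so $\psi_*(\nu\times\mu)$ is canonically determined up to rotation and cannot be adjusted; whether it is Haar is a substantive claim, and it is exactly what makes the $K$-average collapse to $\int\eta_z\,d(\mathrm{Haar})(z)=\nu\times\mu\times\theta_I$ rather than to $\int\eta_z\,d\rho(z)$ for some other quasi-invariant $\rho$, in which case the limit would not be $\int f\,d\mu$. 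The paper proves Haar-ness in two steps, both using hypotheses your argument never invokes: (i) type $III_1$ of the base makes $\Gamma\cc(B\times\RR,\nu\times\theta)$ ergodic, which forces the fiberwise pushforwards $\mu_{(b,t)}$ of $\mu$ under $x\mapsto\phi(b,x,t)$ to be a.e.\ equal to a single measure $\rho$; (ii) the $\Gamma$-invariance of $\mu$ shows $\mu_{(gb,t)}$ is the translate of $\mu_{(b,t)}$ by $R(g,b)$, so $\rho$ is invariant under translation by the essential values of $R$, which are dense in $\RR$ again by type $III_1$; hence $\rho$ is Haar. Without this argument (or a substitute) the identification of the limiting constant does not follow. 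A smaller omission: you never use weak mixing of $\Gamma\cc(B,\nu)$, which is what guarantees that $\Gamma\cc(B\times X,\nu\times\mu)$ is ergodic before the Feldman--Moore machinery can be applied at all.
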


We remark that the case where the type of the action is $\lambda\in (0,1)$ and the stable type is $\tau\in (0,1)$ involves the discrete Maharam extension and was considered in \cite{BN2}.  For completeness we state this result explicitly. 
\begin{thm}\label{previous}\cite[Corollary 5.4]{BN2}. 
Let $\Gamma \cc (B,\nu)$ be a nonsingular essentially free weakly mixing action on a standard probability space of type $III_\lambda$ and stable type $III_\tau$ for some $\lambda,\tau \in (0,1)$. Assume $R_\lambda(g,b)\in \ZZ$ for every $g\in G, b \in B$  and $\tau=\lambda^N$ for some $N\ge 1$. Let $\cR$ be the equivalence relation on $B \times \{0,1,\ldots, N-1\}$ obtained by restricting the orbit-equivalence relation on the discrete Maharam extension. Thus, $(b,t)\cR(b',t')$ if there exists $\gamma \in \Gamma$ such that $(\gamma b, t+ R_\lambda(\gamma,b)) = (b',t')$. Let $\alpha:\cR \to \Gamma$ be the cocycle $\alpha((b',t'), (b,t))=\gamma$ if $\gamma(b,t)=(b',t')$. Also, let $K =\ZZ/N\ZZ$ and $K \cc B\times \{0,1,\dots, N-1\}$ be the action $(r + T\ZZ)(b,t) = (b,t+r)$ (where $t+r$ is taken modulo $N$). Then $\alpha$ is weakly mixing relative to this $K$-action.
\end{thm}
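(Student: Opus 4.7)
The plan is to reduce the claim to an averaging identity on the Maharam extension and then apply the stable type $III_\lambda$ structure to identify the average. By the standard reduction via ergodic decomposition, I may assume $\Gamma \cc (X,\mu)$ is ergodic, so $\EE[f|\Gamma] = \int f\,d\mu$; the goal becomes to show that for $G := \EE[f|\cR_{\beta\alpha}]$ the $K$-average $\int_K G(k\cdot(b,t),x)\,dk$ equals $\int f\,d\mu$ for a.e. $((b,t),x)$. The first step is to lift $G$ to the ambient space: because $\cR_{\beta\alpha}$ is the restriction to $B\times[0,T]\times X$ of the $\Gamma$-orbit relation on $B\times\RR\times X$ (under the Maharam--product action $\gamma\cdot(b,t,x) = (\gamma b,\,t - R(\gamma,b),\,\beta(\gamma)x)$), the function $G$ extends uniquely, by $\Gamma$-invariance, to a $\Gamma$-invariant $\tilde G$ on $B\times\RR\times X$.

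Next I invoke the stable type $III_\lambda$ hypothesis. By \cite[Prop.~8.3, Thm.~8]{FM77}, the associated flow of the Maharam extension of $\Gamma \cc (B\times X,\nu\times\mu)$ is conjugate to the translation flow on $(\RR/T\ZZ,\operatorname{Leb})$, and the $\sigma$-algebra of $\Gamma$-invariants is generated by a $\Gamma$-invariant factor map $\pi$ intertwining $\RR$-translation in $t$ with rotation of the circle. This intertwining forces $\pi(b,t,x) = t + \psi(b,x)\bmod T$ for a measurable $\psi:B\times X\to\RR/T\ZZ$ obeying the cocycle identity $\psi(\gamma b,\beta(\gamma)x) \equiv \psi(b,x) + R(\gamma,b)\pmod T$, and $\tilde G = F\circ\pi$ for some $F:\RR/T\ZZ\to\RR$. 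The $K$-average is then immediate:
$$\int_K G(k\cdot(b,t),x)\,dk = \frac{1}{T}\int_0^T F\bigl((t+r) + \psi(b,x) \bmod T\bigr)\,dr = \frac{1}{T}\int_0^T F(s)\,ds =: \bar F,$$
a constant independent of $(b,t,x)$.

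The final and most delicate step is to identify $\bar F$ with $\int f\,d\mu$. One observes that $\bar F$ is at the same time the period-average $\frac{1}{T}\int_0^T \tilde G(b,t,x)\,dt$; using the $T$-periodicity of $\tilde G$ together with the cocycle identity for $R$, one verifies that this period-average is $\Gamma$-invariant in $(b,x)$, and hence, by the ergodicity of $\Gamma \cc (B\times X)$ that follows from the weak mixing assumption, is an absolute constant. To pin down this constant one disintegrates the normalized Maharam measure $\tilde\mu$ along $\pi$: the stable type $III_\lambda$ hypothesis canonically normalizes the associated flow measure to Lebesgue on $\RR/T\ZZ$, and this disintegration matches the Lebesgue $K$-average against the expected value $\int G\,d\tilde\mu = \int f\,d\mu$, giving $\bar F = \int f\,d\mu$. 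The principal obstacle is precisely this last disintegration argument, which requires a careful reconciliation between the Maharam weight $e^t\,dt$ (under which $\cR_{\beta\alpha}$ is invariant) and the Lebesgue Haar measure on $K$, and relies crucially on the canonical Lebesgue normalization of the associated flow provided by the stable type $III_\lambda$ hypothesis.
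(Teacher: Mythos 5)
Note first that the paper does not actually reprove this statement---it is imported verbatim as \cite[Corollary 5.4]{BN2}---but it does prove the exactly parallel continuous statement, Theorem \ref{thm:typeIII}, via Proposition \ref{prop:erg}, and your outline follows the same architecture: pass to the Maharam extension, identify the $\Gamma$-invariant $\sigma$-algebra with the associated flow through a factor map $\pi$ that is equivariant for translation in the fibre coordinate, write $\EE[f|\cR_{\beta\alpha}]=F\circ\pi$, and observe that the $K$-average of $F\circ\pi$ is the Haar average of $F$. One mismatch with the statement you were asked to prove: Theorem \ref{previous} concerns the \emph{discrete} Maharam extension on $B\times\{0,\dots,N-1\}$ with $K=\ZZ/N\ZZ$, whereas your argument is written throughout for $B\times[0,T]$, the translation flow on $\RR/T\ZZ$, and Lebesgue measure. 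The skeleton transfers, but every ingredient (the weight $\theta_\lambda(\{n\})=\lambda^{-n}$, uniform measure on $\ZZ/N\ZZ$, the fact that the Mackey range of the $\ZZ$-valued cocycle $R_\lambda$ on $B\times X$ is translation on $\ZZ/N\ZZ$ because the product action has type $III_{\lambda^N}$) must be recast in the discrete setting; as written you have proved a sketch of Theorem \ref{thm:typeIII}, not of Theorem \ref{previous}.

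More seriously, the final step has a genuine gap. You need $\pi_*(\nu\times\mu\times\theta_I)$ to be \emph{equal} to Haar measure on the flow space, so that $\frac1T\int_0^T F=\int F\,d(\pi_*\tilde\mu)=\int G\,d\tilde\mu=\int f\,d\mu$; you justify this by saying the stable type hypothesis ``canonically normalizes the associated flow measure to Lebesgue.'' It does not: the type classification determines the associated flow only up to conjugacy and its measure only up to equivalence, so a priori $\pi_*\tilde\mu$ is merely equivalent to Haar. Since $\pi(b,t,x)=t+\psi(b,x)$ and the fibre measure $e^t\,dt$ (resp.\ $\lambda^{-n}$) is far from uniform on a fundamental domain, $\pi_*\tilde\mu$ is Haar precisely when the law of $\psi(b,x)$ is Haar, and this is a nontrivial fact. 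It is exactly the content of the two ``claims'' inside the paper's proof of Proposition \ref{prop:erg}: one first shows that the conditional law $\mu_{(b,t)}$ of $\phi(b,x,t)$ is a.e.\ equal to a single measure $\rho$, using the ergodicity of the Maharam extension of the \emph{base} action (which is where the hypothesis that $\Gamma\cc(B,\nu)$ itself has type $III_\lambda$, not just that the product has type $III_\tau$, enters), and then shows $\rho$ is invariant under translation by the essential values of $R_\lambda(\cdot,b)$ on $B$ alone, which by that same type hypothesis exhaust $\ZZ$ and hence force $\rho$ to be uniform on $\ZZ/N\ZZ$. Your proposal never invokes the type of the base action at all, and without that argument the identification $\bar F=\int f\,d\mu$ does not follow.
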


We remark that the notation in \cite[Corollary 5.4]{BN2} is somewhat different : in that paper $\EE[f|\cI(\tilde{\cR_I})]$ denotes $\EE[f | {\cR_\alpha}]$ and $\theta_{\lambda,I}$ is the probability measure on $\{0,1,\ldots, N-1\}$ given by $\theta_{\lambda,I}(\{j\}) = \frac{\lambda^{-j}}{1+ \lambda^{-1}+\cdots \lambda^{-N+1}}$.

\subsection{
Proof of Theorem \ref{thm:typeIII}}

The proof of Theorem \ref{thm:typeIII} utilizes the construction of the Mackey range of the Radon-Nikodym cocycle associated with the measure $\nu\times \mu$ on $B\times X$, via the following result. 
\begin{prop}\label{prop:erg}
Suppose $\Gamma \cc (B,\nu)$ is an essentially free, ergodic action of type $III_1$.  Let $\Gamma \cc (X,\mu)$ be an ergodic pmp  action. Suppose that the product action $\Gamma \cc (B\times X,\nu\times\mu)$ is ergodic and type $III_\tau$ for some $\tau \in (0,1)$. Let $\Gamma$ act on $B \times X \times \RR$ by 
$$g(b,x,t) = \left(gb,gx, t -R(g,b) \right)$$
where 
$$R(g,b) = \log \left( \frac{d\nu\circ g}{d\nu}(b) \right).$$
Let $T_0=-\log(\tau), I \subset \RR$ be a  compact interval, $\widetilde{\cR}_I$ be the equivalence relation on $B \times X \times I$ obtained by restricting the orbit-equivalence relation. Then there is a measurable map $\phi:B \times X \times \RR \to \RR/T_0\ZZ$ defined almost everywhere, and a measurable family of probability measures $\{\eta_z:~ z \in \RR/T_0\ZZ\}$ defined almost everywhere, satisfying
\begin{enumerate}
\item $\phi(b,x,t+t') \equiv \phi(b,x,t) + t' \mod T_0$ for a.e. $(b,x,t)$ and every $t'$;
\item $\phi$ is essentially $\Gamma$-invariant;
\item almost every $\eta_z$ is an ergodic $\widetilde{\cR}_I$-invariant probability measure on $B\times X \times I$;
\item almost every  $\eta_z$ is supported on $\phi^{-1}(z)$;
\item $T_0^{-1} \int_0^{T_0} \eta_z ~dz = \nu \times \mu \times \theta_I$ where $\theta_I$ is the restriction of $\theta$ to $I$ normalized to have total mass $1$.
\end{enumerate}
\end{prop}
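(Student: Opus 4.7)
The plan is to realize $\phi$ as a factor map onto the associated flow (Mackey range) of the Radon--Nikodym cocycle of the product action $\Gamma \cc (B\times X,\nu\times\mu)$. Because this action is assumed to have type $III_\tau$, the standard structure theory for non-singular actions identifies its associated flow with the translation action of $\RR$ on $\RR/T_0\ZZ$, where $T_0 = -\log\tau$. Equivalently, there is a measurable map $h : B \times X \to \RR/T_0\ZZ$, defined almost everywhere, satisfying the coboundary relation
$$h(gb, gx) - h(b, x) \equiv R(g,b) \pmod{T_0\ZZ}$$
for every $g \in \Gamma$ and almost every $(b,x)$. Setting $\phi(b,x,t) := t + h(b,x) \bmod T_0$, Property (1) is immediate from the definition, and Property (2) follows from the computation
$$\phi(g(b,x,t)) = (t - R(g,b)) + h(gb, gx) \equiv t + h(b,x) = \phi(b,x,t) \pmod{T_0},$$
using $g(b,x,t) = (gb, gx, t - R(g,b))$.

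For Properties (3)--(5), I would first check that $\nu\times\mu\times\theta_I$ is a $\widetilde\cR_I$-invariant probability measure on $B \times X \times I$: every partial isomorphism with graph in $\widetilde\cR_I$ is the restriction of an element of $\Gamma$ acting on the Maharam extension, and $\Gamma$ preserves $\nu\times\mu\times\theta$, so the restriction and normalization pass through. Next, define the $\eta_z$ to be the conditional probabilities in the disintegration of $\nu\times\mu\times\theta_I$ with respect to the factor map $\phi|_{B\times X\times I}$. Property (4) is then tautological. For Property (5), one must verify that the pushforward marginal $(\phi|_{B\times X\times I})_*(\nu\times\mu\times\theta_I)$ equals the normalized Haar measure $T_0^{-1}\, dz$ on $\RR/T_0\ZZ$; this identification is built into the standard normalization of the associated flow, and can be seen via the $\RR$-equivariance $\phi\circ\sigma_s = T_s\circ\phi$ combined with the $e^{-s}$-rescaling of $\nu\times\mu\times\theta$ under the flow $\sigma_s(b,x,t)=(b,x,t+s)$. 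Finally, Property (3) holds because the $\widetilde\cR_I$-equivalence classes in $B\times X\times I$ are precisely the nonempty sets $\phi^{-1}(z)\cap(B\times X\times I)$; ergodicity of $\eta_z$ on its class transfers from ergodicity of the $\Gamma$-action on the Maharam ergodic component $\phi^{-1}(z)\subset B\times X\times\RR$ via the standard recurrence/return-map argument.

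The principal obstacle is the first step: producing the map $h$, i.e.\ showing that the Radon--Nikodym cocycle $R$ becomes a coboundary once reduced modulo $T_0\ZZ$. This is precisely the content of the assumption that $\Gamma \cc (B\times X, \nu\times\mu)$ has type $III_\tau$ with $\tau = e^{-T_0}$, equivalently that its associated flow is $\RR/T_0\ZZ$ under translations; the hypothesis of type $III_1$ for $\Gamma\cc(B,\nu)$ enters by ensuring the product cocycle still has no essential range beyond $T_0\ZZ$. Once $h$ is in hand, everything else reduces to careful but routine application of ergodic decomposition, together with the bookkeeping of $\sigma$-finite normalizations on the Maharam extension.
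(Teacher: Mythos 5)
Your overall skeleton matches the paper's: the paper also realizes $\phi$ as the factor map onto the Mackey range (associated flow) of the Radon--Nikodym cocycle of the product action --- via Mackey point realization together with \cite[Theorem 8]{FM77} rather than via an explicit transfer function $h$, but these are equivalent formulations --- and then obtains the $\eta_z$ by disintegration. Properties (1), (2) and (4) are handled correctly in your write-up.

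The genuine gap is in Property (5). You assert that $(\phi|_{B\times X\times I})_*(\nu\times\mu\times\theta_I)$ is normalized Haar measure because ``this is built into the standard normalization of the associated flow,'' citing the equivariance $\phi\circ\sigma_s=T_s\circ\phi$ and the rescaling of $\nu\times\mu\times\theta$ under $\sigma_s$. This does not work: the equivariance constrains only the pushforward $m$ of the \emph{infinite} measure $\nu\times\mu\times\theta$, and the resulting relation $m=(T_{T_0})_*m=\tau\, m$ forces $m(A)\in\{0,\infty\}$ for every $A$, so it says nothing about the pushforward of the restricted, normalized measure $\nu\times\mu\times\theta_I$. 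Concretely, with $\phi(b,x,t)=t+h(b,x)$ one has $\phi_*(\nu\times\mu\times\theta_I)=h_*(\nu\times\mu)\ast p_*\theta_I$ (convolution on $\RR/T_0\ZZ$, with $p$ the projection), and since $p_*\theta_I$ is essentially never Haar, you must prove that $h_*(\nu\times\mu)$ is Haar. This is exactly where the hypothesis that the \emph{base} action $\Gamma \cc (B,\nu)$ has type $III_1$ enters, and it is the heart of the paper's proof: one shows the pushforward of $\mu$ under $x\mapsto\phi(b,x,t)$ equals a fixed measure $\rho$ for a.e.\ $(b,t)$ (using ergodicity of the Maharam extension of $B$, a consequence of type $III_1$), and then that $\rho$ is invariant under translation by $R(g,b)$ for a.e.\ $b$ and every $g$; type $III_1$ makes these translation amounts dense in $\RR$, forcing $\rho$ to be Haar. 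Your stated role for the $III_1$ hypothesis (``ensuring the product cocycle still has no essential range beyond $T_0\ZZ$'') is a misattribution, since that is already the content of the assumed type $III_\tau$ of the product action. A smaller imprecision: the $\widetilde{\cR}_I$-equivalence classes are countable and are not the fibers $\phi^{-1}(z)\cap(B\times X\times I)$; what you need (and what the Mackey-range description supplies) is that the $\widetilde{\cR}_I$-saturated sets coincide modulo null sets with the $\phi$-preimages, from which ergodicity of almost every $\eta_z$ follows by general ergodic decomposition.
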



\begin{proof}

Given any probability space $(U,\eta)$, let ${\mathcal M} (U,\eta)$ denote the associated measure algebra, consisting of equivalence classes of measurable sets modulo null sets. Let $\sigma$ be a probability measure on $\RR$ that is equivalent to Lebesgue measure. Let ${\mathcal M}_\Gamma$ be the collection of all sets $A \in {\mathcal M}(B\times X \times \RR, \nu\times\mu\times \sigma)$ that are essentially $\Gamma$-invariant, modulo null sets. Note that $\RR$ acts on $B\times X \times \RR$ by $(t, (b,x,t')) \mapsto (b,x,t+t')$ and this action commutes with the $\Gamma$-action. By Mackey's point realization theorem (see e.g., \cite[Corollary B.6]{Zi84}), there is a standard Borel probability space $(Z,\kappa)$, a measurable almost everywhere defined  map $\phi: B\times X \times \RR \to Z$ and a nonsingular action $\{S_t\}_{t\in \RR}$ of $\RR$ on $(Z,\kappa)$ such that
\begin{enumerate}
\item ${\mathcal M}_\Gamma$ is the pullback under $\phi$ of the measure algebra  ${\mathcal M}(Z,\kappa)$;
\item $\phi( g (b,x,t) ) = \phi(b,x,t)$ for a.e. $(b,x,t)$ and every $g\in \Gamma$;
\item $\phi( b,x, t + t') = S_t\phi(b,x,t')$ for a.e. $(b,x,t')$ and every $t \in \RR$.
\end{enumerate}

Because $\Gamma \cc (B\times X, \nu \times \mu)$ is ergodic, \cite[Proposition 8.1]{FM77} implies that the flow $\{S_t\}_{t\in \RR}$ on $(Z,\kappa)$ is also ergodic. The fact that the action $\Gamma \cc (B \times X, \nu \times \mu)$ is type $III_\tau$ is equivalent to the statement that the flow $\{S_t\}_{t\in \RR}$ on $(Z,\kappa)$ is isomorphic to the canonical action of $\RR$ on $\RR/T_0\ZZ$ (see \cite[Theorem 8]{FM77}). So without loss of generality, we will assume $Z=\RR/T_0\ZZ$ and $\phi(b,x,t+t') = \phi(b,x,t)+t'$ for a.e. $(b,x,t)$ and every $t'$. The measure $\kappa$ on $Z=\RR/T_0\ZZ$ is quasi-invariant under this action.

Because $\Gamma \cc (B,\nu)$ is type $III_1$ (and ergodic), it follows that the action $\Gamma \cc (B\times \RR, \nu \times \theta)$ given by $g (b,t) = (gb, t- R(g,b))$ is ergodic \cite[Theorem 8]{FM77}. For almost every $(b,t)\in B\times \RR$, let $\mu_{(b,t)}$ be the pushforward of $\mu$ under the map $x \mapsto \phi(b,x,t)$. We claim that $\mu_{g(b,t)} = \mu_{(b,t)}~\forall g\in \Gamma$ and almost every $(b,t)$. Indeed, $\mu_{g(b,t)}$ is the pushforward of $\mu$ under the map 
$$x \mapsto \phi(gb, x, t- R(g,b)) = \phi( b, g^{-1}x, t- R(g,b) - R(g^{-1},gb)) = \phi(b, g^{-1}x, t)$$
where we have used the fact that $\phi$ is essentially $\Gamma$-invariant and $R(g,b) + R(g^{-1},gb)=0$ by the cocycle equation. Because $\mu$ is $\Gamma$-invariant, this implies the claim: $\mu_{g(b,t)} = \mu_{(b,t)}$ for a.e. $(b,t)$ and every $g\in \Gamma$. 

Because the action $\Gamma \cc (B\times \RR, \nu \times \theta)$ is ergodic, and the map $\mu_{(b,t)}\mapsto \mu_{g(b,t)}$ is essentially invariant, there is a measure $\rho$ on $\RR/T_0\ZZ$ such that $\mu_{(b,t)} = \rho$ for almost every $(b,t)$. We claim that $\rho$ is the Haar probability measure on $\RR/T_0\ZZ$. To see this, note that for almost every  $(b,t)$ and every $g\in \Gamma$, $\mu_{(b,t)} = \mu_{(gb,t)} = \rho$. However, $\mu_{(gb,t)}$ is the pushforward of $\mu$ under the map
$$x \mapsto \phi(gb,x,t) = \phi(b,g^{-1}x, t- R(g^{-1},gb)) = \phi(b,g^{-1}x, t) +R(g,b).$$
Because $\mu$ is $\Gamma$-invariant, this is the same as the pushforward of $\mu$ under the map $x \mapsto \phi(b,x,t) + R(g,b)$. In other words, it is the pushforward of $\mu_{(b,t)}$ under the map $t' \mapsto t' +R(g,b)$. Thus we have that $\rho$ is invariant under addition by $R(g,b)$ for a.e. $b\in B$ and $g\in \Gamma$. Because $\Gamma \cc (B,\nu)$ is type $III_1$, this implies that $\rho$ is $\RR$-invariant. So it is Haar probability measure as claimed.

Let $I \subset \RR$ be a compact interval. Because $\mu_{(b,t)}=\rho$ for a.e. $(b,t)$, it follows $\rho=\phi_*( \nu \times \mu \times \theta_I)$. Let $\{\eta_z:~ z\in \RR/T_0\ZZ\}$ be the decomposition of $\nu\times\mu\times\theta_I$ over $(\RR/T_0\ZZ,\rho)$. This means that 
\begin{enumerate}
\item almost  $\eta_z$ is a probability measure on $B\times X\times I$ with $\eta_z( \{ (b,x,t):~ \phi(b,x,t)=z\}) = 1$;
\item $\nu\times\mu\times\theta_I = \int \eta_z~d\rho(z)$.
\end{enumerate}
By definition of $\phi$, each $\eta_z$ is $\widetilde{\cR}_I$-invariant and ergodic. This implies the proposition.

\end{proof}


\begin{proof}[Proof of Theorem \ref{thm:typeIII}]
Recall that $\Gamma \cc (B,\nu)$ has stable type $III_\lambda$, $\lambda \in (0,1)$ and $T=-\log(\lambda)$. Let $\Gamma \cc (X,\mu)$ be an ergodic action and $\widetilde{\cR}_I$ is the corresponding equivalence relation on  $B\times X \times [0,T]$. So $(b,x,t) \widetilde{\cR}_I (b',x', t')$ if there exists $\gamma \in \Gamma$ such that $(b',x', t')=(\gamma b,\gamma x, t - R(\gamma,b))$.


Because $\Gamma \cc (B,\nu)$ is stable type $III_\lambda$, the ratio set of $\Gamma \cc (B\times X,\nu\times\mu)$ contains $\{\lambda^n:~n\in\ZZ\}$. Thus $\lambda = \tau^n$ for some $n\ge 1$, $n \in \ZZ$. Let $T_0=-\log(\tau)$. Because $\Gamma \cc (B,\nu)$ is weakly-mixing, $\Gamma \cc (B\times X,\nu \times \mu)$ is ergodic. So Proposition \ref{prop:erg} implies the existence of a map $\phi: B \times X \times \RR \to \RR/T_0\ZZ$ and probability measures $\{\eta_z:~z \in \RR/T_0\ZZ\}$ on $B\times X \times I$ satisfying conditions (1-5) of the proposition. So $\nu\times \mu \times \theta_I = T^{-1} \int_0^T \eta_z~dz$ is the ergodic decomposition of $\nu\times \mu\times \theta_I$ (w.r.t. $\widetilde{\cR}_I$). Thus, for any $f\in L^1(B\times X \times I)$,
$$\EE_{\nu\times\mu\times\theta_I}[ f | {\widetilde{\cR}_I} ] (b,x,t) =  \int f ~d\eta_{\phi(b,x,t)}.$$
Because $\phi$ is equivariant with respect to the $\RR$-action, it follows that for a.e. $(b,x)$,
$$\frac{1}{T_0} \int_0^{T_0} \eta_{\phi(b,x,t)} ~dt = \int \eta_z ~d\rho(z) = \nu\times\mu\times\theta_I$$
where $\rho$ is the Haar probability measure on $\RR/T_0\ZZ$. Because $T=n T_0$,
\begin{eqnarray*}
T^{-1} \int_0^T \EE_{\nu\times\mu\times\theta_I}[f| {\widetilde{\cR}_I}|(b,x,t)~dt &=& T^{-1}\int_0^{T} \int f ~d\eta_{\phi(b,x,t)} dt\\
&=& T_0^{-1}\int_0^{T_0} \int f ~d\eta_{\phi(b,x,t)} dt= \int f ~d\nu\times\mu\times\theta_I
\end{eqnarray*}
as required.

\end{proof}


\section{Cocycles and ergodic theorems : Some examples}\label{sec:example}

\subsection{Random walks, the associated cocycle, and ergodic theorems for convolutions} 
The construction in the present section is of a weakly-mixing cocycle on an ergodic amenable pmp equivalence relation {\it for an arbitrary countable group}. 
\begin{thm}\label{thm:existence}
For any countably infinite group $\Gamma$, there exists a weakly mixing cocycle $\alpha:\cR \to \Gamma$, where $\cR\subset B\times B$ is an amenable discrete ergodic pmp equivalence relation.
\end{thm}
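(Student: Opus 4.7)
The plan is to realize $\cR$ as the orbit equivalence relation of a Bernoulli $\ZZ$-shift with alphabet $\Gamma$ and to read off $\alpha$ from the induced random walk on $\Gamma$.

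\emph{Construction.} Pick a symmetric probability measure $\eta$ on $\Gamma$ with $\eta(e)>0$ whose support generates $\Gamma$ as a group; such $\eta$ exists on any countable $\Gamma$ (for instance, enumerate $\Gamma=\{g_1,g_2,\ldots\}$ and set $\eta(e)=1/2$, $\eta(g_i^{\pm 1})\propto 2^{-i}$). Let $(B,\nu):=(\Gamma^{\ZZ},\eta^{\otimes\ZZ})$ and let $T$ be the shift $T(b)_i=b_{i+1}$. Then $T$ is a free, pmp, mixing $\ZZ$-action, so its orbit equivalence relation $\cR\subset B\times B$ is a discrete, pmp, ergodic equivalence relation; since $\cR$ is generated by a single $\ZZ$-action, it is hyperfinite, and hence amenable by the Connes--Feldman--Weiss theorem. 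Define $\phi:B\to\Gamma$ by $\phi(b):=b_0$ and let $\alpha:\cR\to\Gamma$ be the unique cocycle with $\alpha(Tb,b)=\phi(b)$; explicitly, $\alpha(T^n b,b)=b_{n-1}b_{n-2}\cdots b_0$ for $n\ge 1$.

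\emph{Reduction to skew-product ergodicity.} Fix any ergodic pmp action $\Gamma\cc^\beta(X,\mu)$. By construction, $\cR_{\beta\alpha}$ coincides with the orbit equivalence relation of the single transformation $S:B\times X\to B\times X$, $S(b,x):=(Tb,\beta(b_0)x)$, so ergodicity of $\cR_{\beta\alpha}$ is equivalent to ergodicity of $S$. Given an $S$-invariant $F\in L^2(B\times X)$, iteration yields the key identity $F(b,x)=F(T^n b,\beta(b_{n-1}\cdots b_0)x)$ for all $n\ge 1$ and a.e.\ $(b,x)$.

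\emph{Main step and obstacle.} To conclude that $F$ is a.e.\ constant, decompose $L^2(X)=\CC\mathbf{1}\oplus L_0^2(X)$ along the $\Gamma$-invariant splitting. The projection of $F$ onto $L^2(B)\otimes\CC$ depends only on $b$ and is $T$-invariant, hence constant by ergodicity of the Bernoulli shift. The real work is to kill the complementary projection $F_0\in L^2(B)\otimes L_0^2(X)$, for which the invariance reads $F_0(Tb,\cdot)=U_{b_0}F_0(b,\cdot)$ where $U_g$ is the Koopman unitary of $\beta(g)$. The main obstacle is to leverage the i.i.d.\ structure of the coordinates $b_i$ together with ergodicity of $\beta$. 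I would use that the Markov operator $Pf(x):=\int f(\beta(g)x)\,d\eta(g)$ is a self-adjoint contraction on $L_0^2(X)$: the Dirichlet identity $\langle(I-P)f,f\rangle=\tfrac12\int\!\!\int|f(x)-f(\beta(g)x)|^2\,d\eta(g)d\mu(x)$, combined with the generating assumption on $\supp\eta$ and ergodicity of $\beta$, rules out $+1$-eigenvectors, and the aperiodicity condition $\eta(e)>0$ similarly rules out $-1$-eigenvectors; so $P^n\to 0$ strongly on $L_0^2(X)$ by the spectral theorem. A reverse-martingale argument applied to the decreasing tail $\sigma$-algebras $\mathcal G_n:=\sigma(b_i:i\ge n)$ (whose intersection is $\nu$-trivial by the Kolmogorov zero-one law), combined with the fact that the increments $b_0,\dots,b_{n-1}$ are independent of $\mathcal G_n$, identifies $\EE[F_0\mid\mathcal G_n\otimes\mathcal B(X)](b,x)$ with $P^n$ applied in the $x$-variable to the $X$-slice of $F_0$ at the shifted point $T^n b$; sending $n\to\infty$ and using strong convergence $P^n\to 0$ on $L_0^2(X)$ then forces $F_0=0$. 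This resolves the obstacle and establishes ergodicity of $S$, hence weak mixing of $\alpha$.
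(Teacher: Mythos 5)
Your construction is exactly the one the paper uses: the Bernoulli shift $T$ on $(\Gamma^{\ZZ},\eta^{\otimes\ZZ})$, its orbit relation $\cR$ (hyperfinite by Dye, hence amenable by Connes--Feldman--Weiss), and the random-walk cocycle $\alpha$, with weak mixing reduced to ergodicity of the skew product $S(b,x)=(Tb,\beta(b_0)x)$. Up to that reduction everything you write is correct. The paper then simply invokes Kakutani's random ergodic theorem for the ergodicity of $S$; you instead attempt a direct proof, and that is where there is a genuine gap.

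The problem is the direction of your martingale. Granting the identification $\EE[F_0\mid\mathcal G_n\otimes\mathcal B(X)](b,x)=\bigl(P^nF_0(T^nb,\cdot)\bigr)(x)$ --- which, incidentally, already fails on the two-sided shift, since the negative coordinates of $T^nb$ are $b_{n-1},b_{n-2},\dots$ and so $F_0(T^nb,\cdot)$ is not $\mathcal G_n$-measurable; it is correct on the one-sided factor --- the reverse martingale convergence theorem gives
$$\EE[F_0\mid\mathcal G_n\otimes\mathcal B(X)]\longrightarrow \EE\Bigl[F_0\,\Big|\,\textstyle\bigcap_n\bigl(\mathcal G_n\otimes\mathcal B(X)\bigr)\Bigr]=\int_B F_0(b,\cdot)\,d\nu(b),$$
not $F_0$. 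Combining this with $P^n\to 0$ on $L^2_0(X)$ therefore only yields $\int_B F_0(b,x)\,d\nu(b)=0$ for a.e.\ $x$, i.e.\ the fact that the $b$-average of an $S$-invariant function is $P$-harmonic and hence constant. It does not force $F_0=0$: an $S$-invariant $F_0$ with vanishing $b$-average is precisely what still has to be excluded. The missing step runs the martingale in the opposite direction: on the one-sided factor, condition on the increasing filtration $\mathcal F_n=\sigma(b_0,\dots,b_{n-1})$. There $T^nb$ is independent of $\mathcal F_n$, so $S$-invariance gives $\EE[F\mid\mathcal F_n\otimes\mathcal B(X)](b,x)=f\bigl(\beta(b_{n-1}\cdots b_0)x\bigr)$ with $f=\int_B F(b,\cdot)\,d\nu(b)$, which your harmonicity argument shows is constant; since $\bigvee_n\mathcal F_n$ is the full $\sigma$-algebra, forward martingale convergence forces $F$ to be constant, and ergodicity of the two-sided $S$ then follows by passing to the natural extension. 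This is essentially the proof of Kakutani's theorem that the paper cites. Your spectral preliminaries (symmetry of $\eta$, $\eta(e)>0$, absence of $\pm1$-eigenvectors of $P$ on $L^2_0$) are correct but more than is needed: only the absence of $+1$-eigenvectors, i.e.\ constancy of $L^2$-harmonic functions, actually enters.
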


\begin{proof}


Choose a probability measure $\kappa$ on $\Gamma$ whose support generates $\Gamma$. Consider the product space $\Gamma^\ZZ$ with the product topology. Let $\kappa^\ZZ$ be the product measure on $\Gamma^\ZZ$. Let $T: \Gamma^\ZZ \to \Gamma^\ZZ$ be the usual shift action $Tx(n):=x(n+1)$. Let $\cR$ be the equivalence relation on $\Gamma^\ZZ$ determined by the orbits of $\inn{T}\cong \ZZ$, namely $\cR=\{ (x,T^nx):~ x\in \Gamma^\ZZ, n \in \ZZ\}=\cO_\ZZ(S^\ZZ)$. Dye's Theorem \cite{Dy59, Dy63} implies $\cR$ is the hyperfinite $II_1$ equivalence relation (modulo a measure zero set), so that it is amenable \cite{CFW}. 

Define $\alpha:\cR\to \Gamma$ by 
\begin{displaymath}
\alpha(x,T^nx)= \left\{ \begin{array}{ll}
x(1)x(2)\cdots x(n) & \textrm{ if } n > 0 \\
e & \textrm{ if } n = 0\\
x(0)^{-1}x(-1)^{-1}\cdots x(n+1)^{-1} & \textrm{ if } n <0
\end{array}\right.
\end{displaymath}
$\alpha$ is a measurable cocycle well-defined on an invariant conull  measurable set. 

To show $\alpha$ is weakly mixing, let $\Gamma \cc (X,\mu)$ be a pmp action. Define $\tilde{T}:\Gamma^\ZZ \times X \to \Gamma^\ZZ \times X$ by $\tilde{T}(x,y) = (Tx, x(0)^{-1}y)$. Note that the relation $\cR(X) $ is the orbit-equivalence relation of $\tilde{T}$ on $\Gamma^\ZZ\times X$. Therefore, $\cR(X)$ is ergodic if and only if $\tilde{T}$ is ergodic. Kakutani's random ergodic theorem \cite{Ka55} implies $\tilde{T}$ is indeed ergodic.
\end{proof}

What then is the  pointwise ergodic theorem arising from our construction above ?  To answer this, recall that if $\kappa_1,\kappa_2$ are two probability measures on $\Gamma$ then their convolution $\kappa_1*\kappa_2$ is a probability measure on $\Gamma$ defined by
$$\kappa_1\ast \kappa_2(\{g\}) = \sum_{h \in \Gamma} \kappa_1(\{gh^{-1}\})\kappa_2(\{h\}).$$
Let $\kappa^{*n}$ denote the $n$-fold convolution power of $\kappa$.

\begin{cor}
Let $\kappa$ be a probability measure on $\Gamma$ whose support generates $\Gamma$. Let $\rho_n = \frac{1}{n} \sum_{k=1}^n \kappa^{\ast k}$. Then $\{\rho_n\}_{n=1}^\infty$ is a pointwise ergodic sequence in $L\log L$.
\end{cor}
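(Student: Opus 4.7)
The plan is to derive this directly from Theorem \ref{thm:general} applied to the weakly mixing cocycle $\alpha : \cR \to \Gamma$ constructed in the proof of Theorem \ref{thm:existence}. In order to make the convolutions $\kappa^{\ast k}$ emerge on the nose as the projected measures $\zeta_n$, I would run that construction with the reflected measure $\check\kappa$ (where $\check\kappa(g) := \kappa(g^{-1})$) in place of $\kappa$; this is legitimate since $\check\kappa$ has generating support whenever $\kappa$ does. Thus $(B,\nu) = (\Gamma^{\ZZ},\check\kappa^{\ZZ})$, $T$ is the shift, $\cR$ is its $\ZZ$-orbit equivalence relation (which is hyperfinite, hence amenable, by Dye's theorem), and $\alpha$ is the cocycle of Theorem \ref{thm:existence}. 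I take $K$ trivial, $\psi \equiv 1 \in L^\infty(B,\nu)$, and the leafwise Ces\`aro averages along orbits:
\[
\omega_n(c,b) \;:=\; \tfrac{1}{n}\,\#\{1 \le k \le n : c = T^k b\}.
\]

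\textbf{Step 1: leafwise maximal and ergodic theorems.} I would verify that $\Omega = \{\omega_n\}_{n\in\NN}$ is a pointwise ergodic family in $L^1$ and satisfies the weak $(1,1)$-type maximal inequality on $\cR$. For any measurable cocycle $\alpha' : \cR \to \Aut(X,\mu)$, the skew-product relation $\cR_{\alpha'}$ is the orbit relation of the pmp transformation $\widetilde{T}(b,x) := (Tb,\,\alpha'(Tb,b)\,x)$, and unwinding the definitions gives
\[
\A[F|\omega_n](b,x) \;=\; \frac{1}{n}\sum_{k=1}^{n} F\bigl(\widetilde{T}^{\,k}(b,x)\bigr).
\]
Birkhoff's pointwise ergodic theorem and the Hardy--Littlewood--Wiener maximal inequality for the $\ZZ$-action $\widetilde{T}$ on $(B\times X,\nu\times\mu)$ then supply, respectively, the a.e.\ convergence to $\EE[F|\cR_{\alpha'}]$ and the weak $(1,1)$ inequality, for every such $\alpha'$.

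\textbf{Step 2: identifying $\zeta_n$ with $\rho_n$.} From the explicit formula in the proof of Theorem \ref{thm:existence}, for $k\ge 1$ one has $\alpha(T^k b,b) = b(k)^{-1} b(k-1)^{-1}\cdots b(1)^{-1} = (b(1)b(2)\cdots b(k))^{-1}$. Under $\nu = \check\kappa^{\ZZ}$ the coordinates $b(i)$ are i.i.d.\ with law $\check\kappa$, so $b(1)\cdots b(k)$ has law $\check\kappa^{\ast k}$ and hence $\alpha(T^k b,b)$ has law $\kappa^{\ast k}$. Therefore
\[
\zeta_n(\gamma) \;=\; \int_B \sum_{c:\,\alpha(c,b)=\gamma} \omega_n(c,b)\,\psi(b)\,d\nu(b) \;=\; \frac{1}{n}\sum_{k=1}^{n}\kappa^{\ast k}(\gamma) \;=\; \rho_n(\gamma).
\]

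\textbf{Step 3: conclude via Theorem \ref{thm:general}.} Since $\alpha$ is weakly mixing (Theorem \ref{thm:existence}), since $K$ is trivial (so weak mixing relative to $K$ is just weak mixing) and $\psi \in L^\infty(B,\nu)$, and since Step 1 supplies the $L^1$ pointwise ergodic family and weak $(1,1)$ maximal hypotheses, the $L\log L$-portion of Theorem \ref{thm:general} applies and yields that $\{\zeta_n\}_{n\in\NN} = \{\rho_n\}_{n\in\NN}$ is a pointwise ergodic family in $L\log L$, as desired.

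The only genuinely delicate step is the bookkeeping in Step 2, where the orientation of the cocycle has to be synchronized with the convolution convention; the remaining work is a direct application of Theorem \ref{thm:general}, Theorem \ref{thm:existence}, and the classical Birkhoff/maximal theorems for $\ZZ$-actions.
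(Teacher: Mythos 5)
Your proposal is correct and follows essentially the same route as the paper: the paper likewise takes the shift relation on $(\Gamma^\ZZ,\kappa^\ZZ)$ with the cocycle of Theorem \ref{thm:existence}, uses the leafwise Ces\`aro averages along $T$-orbits (justified by Birkhoff's and Wiener's theorems), and applies Theorem \ref{thm:general} with $K$ trivial and $\psi\equiv 1$. The only difference is bookkeeping: the paper orients the leafwise measures as $\omega_n(x,T^ix)=1/n$ and keeps the base measure $\kappa^\ZZ$, under which the pushed-forward measures come out as $\rho_n$ directly, so your passage to the reflected measure $\check\kappa$ is unnecessary but harmless.
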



\begin{proof}

Define $\alpha:\cR \to \Gamma$ as in the proof of Theorem \ref{thm:existence}. Let $\Omega=\{\omega_n\}_{n=1}^\infty$ be the sequence of leafwise probability measures on $\cR$ defined by $\omega_n(x,y) = \frac{1}{n}$ if $y= T^i x $ for some $1\le i \le n$. Because $T$ is ergodic, and these measures are the ergodic averages on $T$-orbits, this sequence satisfies the weak (1,1)-type maximal inequality and is pointwise ergodic in $L^1$, by Birkhoff's and Wiener's Theorem.

By Theorem \ref{thm:general}, if we set 
$$\zeta_n(\gamma):= \int_{\Gamma^\ZZ} \sum_{c:~\alpha(c,b)=\gamma} \omega_n(c,b) ~d\kappa^\ZZ(b) $$
then $\{\zeta_n\}_{n=1}^\infty$ is a pointwise ergodic family in $L\log(L)$. To see this, set $K$ equal to the trivial group and $\psi \equiv 1$. 

The corollary now follows from a short calculation:
\begin{eqnarray*}
 \int \sum_{c:~\alpha(c,b)=\gamma} \omega_n(c,b) ~d\kappa^\ZZ(b) &=& \frac{1}{n} \sum_{i=1}^n \kappa^\ZZ(\{x \in \Gamma^\ZZ:~\alpha(x,T^i x) = \gamma\})\\
 &=&\frac{1}{n} \sum_{k=1}^n \kappa^{\ast k}(\{\gamma\}) = \rho_n(\{\gamma\}).
 \end{eqnarray*}
 \end{proof}
Thus this argument gives a different proof of the pointwise ergodic theorem for the uniform averages of convolution powers, which follows (for example) from  the Chacon-Ornstein theorem or the Hopf-Dunford Schwartz theorem on the pointwise convergence of uniform averages of powers of a Markov operator.

\subsection{The free group}
We proceed to demonstrate our approach by obtaining pointwise ergodic theorems from the action of the free group on its boundary and its double-boundary.


\subsubsection{The boundary action}\label{sec:free}
Let $\FF=\FF_r=\langle a_1,\dots ,a_r \rangle$ be the free group of rank $r\ge 2$, and $S=\{a_i^{\pm 1}:~1\le i \le r\}$ a set of free generators. The {\em reduced form} of an element $g\in \FF$ is the unique expression  $g=s_1\cdots s_n$ with $s_i \in S$ and $s_{i+1}\ne s_i^{-1}$ for all $i$.  Define $|g|:=n$, the length of the reduced form of $g$ and the sphere $S_n=\set{w\,;\, \abs{w}=n}$. 

We identify the boundary of $\FF$ with the set of all sequences $\xi=(\xi_1,\xi_2,\ldots) \in S^\NN$ such that $\xi_{i+1} \ne \xi_i^{-1}$ for all $i\ge 1$, and denote it by $\partial \FF$. Let $\nu$ be the probability measure on $\partial \FF$ determined  as follows. For every finite sequence $t_1,\ldots, t_n$ with $t_{i+1} \ne t_i^{-1}$ for $1\le i <n$, 
$$\nu\Big(\big\{ (\xi_1,\xi_2,\ldots) \in \partial \FF :~ \xi_i=t_i ~\forall 1\le i \le n\big\}\Big) := |S_n|^{-1}=(2r-1)^{-n+1}(2r)^{-1}.$$
By Carath\'eodory's Extension Theorem, this uniquely defines $\nu$ on the Borel sigma-algebra of $\partial \FF$.

The action of $\FF$ on $\partial \FF$ is given by 
$$(t_1\cdots t_n)\xi := (t_1,\ldots,t_{n-k},\xi_{k+1},\xi_{k+2}, \ldots)$$ where $t_1,\ldots, t_n \in S$,  $g=t_1\cdots t_n$ is in reduced form and $k$ is the largest number $\le n$ such that $\xi_i^{-1} = t_{n+1-i}$ for all $i\le k$.  In this case the Radon-Nikodym derivative satisfies
$$\frac{d\nu \circ g}{d\nu}(\xi) = (2r-1)^{2k-n}.$$
Thus if $\lambda=(2r-1)^{-1}$ then $R_\lambda(g,\xi) = 2k-n \in \Z$. 






The type of the action $\FF\cc (\partial \FF, \nu)$ is easily seen to $III_\lambda$ where $\lambda=(2r-1)^{-1}$. It is shown implicitly in \cite[Theorem 4.1]{BN1} that the stable type of $\FF\cc (\partial \FF, \nu)$ is $III_{\lambda^2}$. In fact, if $\FF^2$ denotes the index 2 subgroup of $\FF$ consisting of all elements $g$ with $|g| \in 2\Z$ then $\FF^2 \cc (\partial \FF,\nu)$ is type $III_{\lambda^2}$ and stable type $III_{\lambda^2}$. It is also weakly mixing. Indeed, $(\partial \FF,\nu)$ is naturally identified with $P(\FF,\mu)$, the Poisson boundary of the random walk generated by the measure $\mu$ which is uniformly distributed on $S$.  By \cite{AL}, the action of any countable group $\Gamma$ on the Poisson boundary $P(\Gamma,\kappa)$ is weakly mixing whenever the measure $\kappa$ is adapted. This shows that $\FF \cc (\partial \FF,\nu)$ is weakly mixing. Moreover, if $S^2 = \{st:~s, t \in S\}$ then $(\partial \FF,\nu)$ is naturally identified with the Poisson boundary $P(\FF^2, \mu_2)$ where $\mu_2$ is the uniform probability measure on $S^2$. So the action $\FF^2 \cc (\partial \FF,\nu)$ is also weakly mixing.


Let $\FF$ act on $\partial \FF \times \ZZ$ by $g(b,t) = (gb, t+ R_\lambda(g,b))$. This is the Maharam extension. Although it is possible to use Theorem \ref{previous} to obtain an equivalence relation on $\partial \FF \times \{0,1\}$ and a cocycle, it is more fruitful to consider a slightly different construction. Let $\cR$ be the orbit-equivalence relation restricted to $\partial \FF \times \{0\}$, which we may, for convenience, identify with $\partial \FF$. In other words, $b \cR b'$ if and only if there is an element $g \in \FF$ such that $gb=b'$ and $\frac{d\nu \circ g}{d\nu}(b)=1$. Observe that this is the same as the (synchronous) tail-equivalence relation on $\FF$. In other words, two elements $\xi=(\xi_1,\xi_2,\ldots)$, $\eta=(\eta_1,\eta_2,\ldots) \in \partial \FF$ are $\cR$-equivalent if and only if there is an $m$ such that $\xi_n=\eta_n$ for all $n \ge m$. Also note that if $b\cR (gb)$ then necessarily $g \in \FF^2$. So $\cR$ can also be regarded as the orbit-equivalence relation for the $\FF^2$-action on $\partial \FF \times \ZZ$ restricted to $\partial \FF \times \{0\}$. 

Let $\alpha:\cR \to \FF^2$ be the cocycle $\alpha(gb,b)=g$ for $g\in \FF^2, b \in \partial \FF$. This is well-defined almost everywhere because the action of $\FF^2$ is essentially free. Because $\FF^2 \cc (\partial \FF,\nu)$ has type $III_{\lambda^2}$ and stable type $III_{\lambda^2}$, this cocycle is weakly mixing for $\FF^2$. In other words, if $\FF^2 \cc (X,\mu)$ is any ergodic pmp action then the equivalence relation $\cR_\alpha$ defined on $\partial \FF \times X$ by $g(b,x)\cR_\alpha (b,x)$ if $\frac{d\nu \circ g}{d\nu}(b)=1$ is ergodic. This is the import of \cite[Theorem 4.1]{BN1}.

Now let $\omega_n: \cR \to [0,1]$ be the leafwise probability measure given by
\begin{displaymath}
\omega_n( gb,b ) =  \left\{ \begin{array}{cc}
(2r-2)^{-1}(2r-1)^{-n+1} & \textrm{ if } |g|=2n \\
0 & \textrm{ otherwise} \end{array}\right.
\end{displaymath}
In other words, $\omega_n(\cdot, b)$ is uniformly distributed over the set of all elements of the form $gb$ with $|g|=2n$ and $\frac{d\nu \circ g}{d\nu}(b)=1$. In \cite[Corollary 5.2 \& Proposition 5.3]{BN1} it is shown that $\Omega=\{\omega_n\}_{n=1}^\infty$ is pointwise ergodic in $L^1$ and satisfies the weak (1,1)-type maximal inequality. 

It follows from Theorem \ref{thm:general} that if
$$\zeta_n(\gamma):= \int_{\partial \FF} \sum_{c:~\alpha(c,b)=\gamma} \omega_n(c,b)  ~d\nu(b)$$
then $\{\zeta_n\}_{n=1}^\infty$ is pointwise ergodic in $L\log(L)$ for $\FF_2$-actions. A short calculation reveal that $\zeta_n$ is uniformly distributed on the sphere of radius $2n$. This yields:
\begin{thm}\label{thm:sphere}
Let $\FF \cc (X,\mu)$ be a pmp action. Then for any $f \in L\log L(X,\mu)$, 
$$|S_{2n}|^{-1} \sum_{|g|=2n} f\circ g$$
converges pointwise a.e as $n\to\infty$ to $\EE[f| \FF^2]$ the conditional expectation of $f$ on the sigma-algebra of $\FF^2$-invariant measurable subsets.
\end{thm}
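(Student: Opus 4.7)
The plan is to obtain Theorem \ref{thm:sphere} as a direct application of Theorem \ref{thm:general} to the amenable equivalence relation $\cR$ on $\partial\FF$ (the synchronous tail relation) together with the cocycle $\alpha:\cR\to\FF^2$ and the leafwise system $\Omega=\{\omega_n\}$ described just before the statement. The only genuinely new work is an explicit identification of the pushed-forward measures $\zeta_n$ with the uniform probability measures on the spheres $S_{2n}$; once that is done, convergence is immediate from the preceding machinery.

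First I would set up the ingredients for Theorem \ref{thm:general}. Take $K$ to be the trivial group, $\psi\equiv 1 \in L^\infty(\partial\FF,\nu)$. The cited results from \cite{BN1} supply the remaining hypotheses: $\Omega$ is pointwise ergodic in $L^1$ and satisfies the weak $(1,1)$-type maximal inequality, while the cocycle $\alpha$ is weakly mixing for the $\FF^2$-action (in the sense of Definition of weakly mixing, with trivial $K$). The $L\log L$ statement of Theorem \ref{thm:general} then says that $\{\zeta_n\}_{n=1}^\infty$, defined via $\zeta_n(\gamma)=\int_{\partial\FF}\sum_{c:\alpha(c,b)=\gamma}\omega_n(c,b)\,d\nu(b)$, is a pointwise ergodic family in $L\log L$ for all pmp $\FF^2$-actions, with limit $\EE[f\mid\FF^2]$.

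The main step, and the only computational one, is to show $\zeta_n$ is the uniform measure on $S_{2n}$. Because the $\FF^2$-action on $\partial\FF$ is essentially free, the sum over $c$ with $\alpha(c,b)=\gamma$ collapses to the single term $c=\gamma b$, so $\zeta_n(\gamma)=\int \omega_n(\gamma b,b)\,d\nu(b)$, and the integrand is nonzero precisely when $|\gamma|=2n$ and $\gamma b\cR b$, i.e.\ $\tfrac{d\nu\circ\gamma}{d\nu}(b)=1$. Using the Radon–Nikodym formula $\tfrac{d\nu\circ\gamma}{d\nu}(\xi)=(2r-1)^{2k-|\gamma|}$, where $k=k(\gamma,\xi)$ is the cancellation index, this is the condition $k=n$. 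Writing $\gamma=t_1\cdots t_{2n}$ in reduced form, $k=n$ means $\xi_i=t_{2n+1-i}^{-1}$ for $i=1,\dots,n$ and $\xi_{n+1}\neq t_n^{-1}$; the constraint that $\gamma$ is reduced forces $t_{n+1}\neq t_n^{-1}$, so the admissible choices for $\xi_{n+1}$ number $2r-2$. A direct cylinder-set calculation then gives $\nu\{b:k(\gamma,b)=n\}=(2r-2)(2r)^{-1}(2r-1)^{-n}$, and multiplying by the constant value $\omega_n(\gamma b,b)=(2r-2)^{-1}(2r-1)^{-n+1}$ yields $\zeta_n(\gamma)=(2r)^{-1}(2r-1)^{-2n+1}=|S_{2n}|^{-1}$. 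For $|\gamma|\neq 2n$ the integrand vanishes, so $\zeta_n$ is indeed uniform on $S_{2n}$.

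Finally, unwinding definitions, $\pi_X(\zeta_n)(f)(x)=|S_{2n}|^{-1}\sum_{|g|=2n}f(g^{-1}x)$, and since $S_{2n}$ is closed under inversion this equals $|S_{2n}|^{-1}\sum_{|g|=2n}f(gx)$. The $L\log L$ pointwise ergodic conclusion from step one then gives a.e.\ convergence to $\EE[f\mid\FF^2]$, completing the proof. The only delicate point is the cancellation-index bookkeeping in the second paragraph; everything else is formal once Theorem \ref{thm:general} and the results from \cite{BN1} are in hand.
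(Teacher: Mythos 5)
Your proposal is correct and follows exactly the route the paper takes: apply Theorem \ref{thm:general} with trivial $K$ and $\psi\equiv 1$ to the tail relation $\cR$ on $\partial\FF$, the cocycle $\alpha:\cR\to\FF^2$, and the spherical leafwise measures $\omega_n$, quoting \cite{BN1} for the maximal inequality, the pointwise ergodicity of $\Omega$, and the weak mixing of $\alpha$. The paper dismisses the identification $\zeta_n=|S_{2n}|^{-1}\sum_{|g|=2n}\delta_g$ as ``a short calculation''; your cylinder-set computation (with the count of $2r-2$ admissible values of $\xi_{n+1}$, giving $\nu\{k=n\}=(2r-2)(2r)^{-1}(2r-1)^{-n}$ and hence $\zeta_n(\gamma)=(2r)^{-1}(2r-1)^{-2n+1}$) supplies that calculation correctly.
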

This theorem was proven earlier in \cite{NS} and \cite{Bu} by different techniques.

\subsubsection{The double boundary action}

Next we analyze the action of $\FF$ on its double boundary $\partial^2 \FF := \partial \FF \times \partial \FF$. To begin, let $\bnu$ be the probability measure on the double boundary $\partial^2\FF$ defined by
$$d\bnu(b,c) = (2r)(2r-1)^{2d-1}d\nu(b)d\nu(c)$$
where $d \ge 0$ is the largest integer such that $b_i=c_i$ for all $i\le d$. A short calculation shows that $\bnu$ is $\FF$-invariant. Because the action $\FF \cc (\partial \FF,\nu)$ is a strong $\FF$-boundary in the sense of \cite{Ka}, it follows that $\FF \cc (\partial^2 \FF, \bnu)$ is weakly mixing. 

We need to restrict the orbit-equivalence relation on $\partial^2 \FF$ to a fundamental domain. For convenience we choose the domain $D$ equal 
to the set of all $(b,c) \in \partial^2 \FF$ such that the geodesic from $b$ to $c$ passes through the identity element. In other words, $b_1 \ne c_1$. Observe that $\bnu(D)=\left(\frac{2r}{2r-1}\right)\nu \times \nu(D) = 1.$  

It is clarifying to view $D,\bnu$ and the orbit-equivalence relation from a slightly different point of view. To do this, let $\Phi:D \to S^\Z$ be the map 
\begin{displaymath}
\Phi(b,c)_i = \left\{\begin{array}{cc}
b_{-i}^{-1} & i <0 \\
c_{i+1} & i \ge 0
\end{array}\right.\end{displaymath}
Note that this is an embedding so we can identify $D$ with its image in $S^\Z$. Moreover $\Phi(D)$ is the set of all sequences $\xi \in S^\Z$ such that $\xi_i \ne \xi_{i+1}^{-1}$ for all $i$. Let $D'$ denote $\Phi(D)$.

Let $\nu' = \Phi_*\bnu$. Observe that $\nu'$ is a Markov measure. Indeed let $m<n$ be integers and for each $i$ with $m \le i \le n$, let $s_i \in S$ so that $s_i \ne s_{i+1}^{-1}$ for $m \le i < n$. Let $C(s_m,\ldots, s_n)$ be the cylinder set 
$$C(s_m,\ldots, s_n)=\{\xi \in D':~ \xi_i = s_i ~\forall m \le i \le n\}.$$
Then $\nu'(C(s_m,\ldots, s_n)) = (2r)^{-1}(2r-1)^{m-n}$. This formula completely determines $\nu'$ by Carath\'eodory's Extension Theorem.

If $g \in \FF$, $(b,c) \in D$ and $g(b,c) = (gb,gc) \in D$ then $(gb)_1 \ne (gc)_1$ implies that we cannot have partial cancellation of $g$ in both products $gb$ and $gc$. In other words, we must either have that $g=(b_1\cdots b_n)^{-1}$  or $g = (c_1 \cdots c_n)^{-1}$ for some $n$. Let us suppose that $g=(b_1\cdots b_n)^{-1}$. Then $gb = (b_{n+1},b_{n+2},\ldots)$, $gc = (b_n^{-1}, b_{n-1}^{-1},\ldots, b_1^{-1},c_1,c_2,\ldots)$ (this uses that $b_1\ne c_1$ since $(b,c)\in D$) and
$\Phi(gb,gc)_i = \Phi(b,c)_{i-n}$ for every $i$. Similarly, if $g= (c_1 \cdots c_n)^{-1}$ then $\Phi(gb,gc)_i = \Phi(b,c)_{i+n}$ for every $i$.

Let $\cR$ be the equivalence relation on $D$ obtained by restricting the orbit-equivalence relation on $\partial^2 \FF$. The previous paragraph implies that if $\cR'$ is the relation on $D'=\Phi(D)$ obtained by pushing forward $\cR$ under $\Phi$ then $\cR'$ is the same as the equivalence relation determined by the shift. To be precise, define $T:D' \to D'$ by $(T\xi)_i = \xi_{i-1}$ for all $i$. Then  two sequences $\xi, \eta \in D'$ are $\cR'$-equivalent if and only if there is an $n\in \ZZ$ such that $\xi = T^n\eta$. Note that $\nu'$ is $T$-invariant and therefore $\cR'$-invariant.

Let $\alpha: \cR \to \FF$ be the canonical cocycle given by $\alpha(g(b,c), (b,c)) = g$ for every $g \in \FF, (b,c) \in D$.  We can push forward $\alpha$ under $\Phi$ to obtain a cocycle $\alpha': \cR' \to \FF$ given by 
\begin{displaymath}
\alpha'( T^n \xi, \xi) = \left\{ \begin{array}{cc}
(\xi_0 \cdots \xi_{n-1})^{-1} &  \textrm{ if } n\ge 1 \\
e & \textrm{ if } n=0\\
\xi_n \xi_{n+1}\cdots \xi_{-1} & \textrm{ if } n\le -1
\end{array}\right.\end{displaymath}

Because $\FF \cc (\partial^2 \FF,\bnu)$ is weakly mixing, it follows that $\alpha$ and therefore $\alpha'$ are weakly mixing cocycles. Let $\omega_n$ be the leafwise measure on $\cR'$ defined by $\omega_n(T^i \xi, \xi) = \frac{1}{n+1}$ if $0\le i \le n$ and $\omega_n = 0$ otherwise. By Birkhoff's ergodic theorem, $\Omega= \{\omega_n\}_{n=1}^\infty$ is pointwise ergodic in $L^1$ and satisfies the weak $(1,1)$-type maximal inequality. It now follows from Theorem \ref{thm:general} that if 
$$\zeta_n(\gamma):= \int_{D'} \sum_{\eta:~\alpha'(\eta,\xi)=\gamma} \omega_n(\eta,\xi) ~d\nu'(\xi) $$
then $\{\zeta_n\}_{n=1}^\infty$ is pointwise ergodic in $L\log(L)$. A short calculation shows that 
$$\zeta_n  = \frac{1}{n+1} \sum_{i=0}^n \sigma_n$$
where $\sigma_n$ is the uniform probability measure on the sphere of radius $n$ in $\FF$. So Theorem \ref{thm:general} implies:
\begin{thm}
Let $\FF \cc (X,\mu)$ be a pmp action. Then for any $f \in L\log L(X,\mu)$, 
$$ \frac{1}{n+1} \sum_{i=0}^n \sigma_n(f)$$
converges pointwise a.e as $n\to\infty$ to $\EE[f| \FF]$ the conditional expectation of $f$ on the sigma-algebra of $\FF$-invariant measurable subsets.
\end{thm}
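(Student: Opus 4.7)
The strategy is to apply Theorem \ref{thm:general} to the measured equivalence relation $(D',\nu',\cR')$ with the leafwise probability measures $\Omega=\{\omega_n\}_{n=1}^\infty$, the cocycle $\alpha':\cR'\to \FF$, the trivial compact group $K=\{e\}$, and the density $\psi\equiv 1\in L^\infty(D',\nu')$. Since the theorem has already delivered the measures $\zeta_n$ as a pointwise ergodic family in $L\log L$ conditional on the three hypotheses (maximal inequality, pointwise ergodicity of $\Omega$, weak mixing of $\alpha'$ relative to $K$), the proof amounts to verifying those hypotheses and then performing the combinatorial identification of $\zeta_n$ with $\frac{1}{n+1}\sum_{i=0}^{n}\sigma_i$.

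The first step is to verify that $\Omega$ is pointwise ergodic in $L^1$ and satisfies the weak $(1,1)$-type maximal inequality on $(D',\nu',\cR')$. Because $\cR'$ is the orbit-equivalence relation of the shift $T$ and $\omega_n$ is the uniform measure on the forward orbit segment $\{\xi,T\xi,\dots,T^n\xi\}$, the operator $\A[F|\omega_n]$ coincides with the $n$th Cesaro average of $F$ along the skew-product transformation $\tilde T$ on $D'\times X$ induced by any cocycle $\alpha:\cR'\to\Aut(X,\mu)$. Thus the maximal inequality follows from the Wiener maximal inequality and the pointwise convergence from the Birkhoff ergodic theorem; the limit is the conditional expectation of $F$ onto the $\tilde T$-invariant $\sigma$-algebra, which is precisely the $\cR'_\alpha$-saturated $\sigma$-algebra.

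The second step is to verify that $\alpha'$ is weakly mixing. Since $\Phi$ identifies $\cR'$ with the restriction of the orbit-equivalence relation of $\FF\cc (\partial^2\FF,\bnu)$ to the set $D$ of measure one, and the cocycle $\alpha'$ corresponds to $\alpha$ under this identification, the induced skew-product relation $\cR'_{\beta\alpha'}$ on $D'\times X$ is ergodic if and only if the $\FF$-action on $(\partial^2\FF\times X,\bnu\times\mu)$ is ergodic for every ergodic pmp action $\FF\cc(X,\mu)$. This is precisely the weak mixing of $\FF\cc(\partial^2\FF,\bnu)$, which was already observed by invoking the fact that $\FF\cc(\partial\FF,\nu)$ is a strong $\FF$-boundary (Kaimanovich's double ergodicity).

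Having verified the hypotheses, Theorem \ref{thm:general} yields that $\{\zeta_n\}_{n=1}^\infty$ is a pointwise ergodic family in $L\log L$. It remains to evaluate $\zeta_n$. Using the explicit formula for $\alpha'(T^i\xi,\xi)$ in terms of the coordinates of $\xi$ and the Markov formula $\nu'(C(s_0,\dots,s_{i-1}))=(2r)^{-1}(2r-1)^{1-i}=|S_i|^{-1}$, one gets, for $\gamma\in\FF$ with $|\gamma|=i\in\{0,1,\dots,n\}$,
\[
\zeta_n(\gamma)=\frac{1}{n+1}\,\nu'\bigl(\{\xi:\alpha'(T^i\xi,\xi)=\gamma\}\bigr)=\frac{1}{n+1}\cdot\frac{1}{|S_i|},
\]
and $\zeta_n(\gamma)=0$ otherwise; thus $\zeta_n=\frac{1}{n+1}\sum_{i=0}^{n}\sigma_i$. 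Substituting this identification into the conclusion of Theorem \ref{thm:general} yields the desired pointwise convergence to $\EE[f|\FF]$. The only delicate point is the weak mixing verification, which nevertheless reduces cleanly to the strong boundary property of $(\partial\FF,\nu)$; the remaining steps are essentially bookkeeping.
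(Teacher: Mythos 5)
Your proposal is correct and follows essentially the same route as the paper: the same relation $(D',\nu',\cR')$, the same leafwise Cesaro averages $\Omega$, the same cocycle $\alpha'$, weak mixing deduced from the strong-boundary property of $(\partial\FF,\nu)$, and an application of Theorem \ref{thm:general} with $K$ trivial and $\psi\equiv 1$. You merely spell out the ``short calculation'' identifying $\zeta_n$ with $\frac{1}{n+1}\sum_{i=0}^n\sigma_i$ (and correctly fix the index, which appears as a typo in the paper's display).
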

This theorem was proven earlier in \cite{NS}, \cite{Gr} and \cite{Bu} by different techniques. Of course, it also follows directly from Theorem \ref{thm:sphere}.


\subsection{Hyperbolic groups}
We now turn to briefly describe the application of our approach to establishing ergodic theorems for hyperbolic groups in a geometric setting. Full details will appear in \cite{BN4}, and for simplicity we will mention here only a special case of the results established there.


\begin{thm}\label{thm:hyp}
Suppose $\Gamma$ acts properly discontinuously by isometries on a  $CAT(-1)$ space $(X,d_X)$. Suppose there is an $x \in X$ with trivial stabilizer. Define a metric $d$ on $\Gamma$ by $d(g,g'):=d_X(gx,g'x)$. Then there exists a family $\{\kappa_r\}_{r>0}$ of probability measures on $\Gamma$ such that
\begin{enumerate}
\item there is a constant $a>0$ so that each $\kappa_r$ is supported on the annulus $\{g\in \Gamma:~ d(e,g) \in [r-a,r+a]\}$,
\item $\{\kappa_r\}_{r>0}$ is a pointwise ergodic family in $L^p$ for every $p>1$ and in $L\log L$. 
\end{enumerate}
\end{thm}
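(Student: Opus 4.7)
The strategy is to apply Theorem \ref{thm:general} with the amenable equivalence relation coming from the action of $\Gamma$ on its Gromov boundary, in direct analogy with the free group case treated in the previous subsection. Since $\Gamma$ acts properly discontinuously by isometries on the $\mathrm{CAT}(-1)$ space $X$ and some $x$ has trivial stabilizer, $\Gamma$ is word-hyperbolic and acts as a convergence group on the Gromov boundary $\partial X$. The Patterson--Sullivan construction yields a quasi-conformal probability measure $\nu$ on $\partial X$ of dimension equal to the critical exponent $\delta(\Gamma)$; the action $\Gamma \cc (\partial X,\nu)$ is amenable. On the double boundary $\partial^2 X := \partial X \times \partial X \setminus \mathrm{diag}$, the Bowen--Margulis--Sullivan measure $\bar\nu$, obtained by weighting $\nu\times \nu$ by $e^{2\delta \langle b|c\rangle_x}$ (Gromov product), is $\sigma$-finite and $\Gamma$-invariant.

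The plan is to choose a Borel cross-section $D \subset \partial^2 X$ for the geodesic flow such that $\bar\nu(D)=1$ (concretely, $D$ parametrizes unit-speed geodesics passing through a bounded neighborhood of $x$ and normalized by the ``time'' at which they are closest to $x$). Let $\cR$ be the orbit equivalence relation of $\Gamma$ on $\partial^2 X$ restricted to $D$, so $(B,\nu):=(D,\bar\nu|_D)$ is a standard probability space and $\cR$ is a discrete pmp equivalence relation which is amenable because $\Gamma \cc \partial X$ is. Define the cocycle $\alpha:\cR \to \Gamma$ by $\alpha((b',c'),(b,c))=g$ when $g(b,c)=(b',c')$; this is almost-everywhere well-defined because properness plus trivial stabilizer of $x$ gives an essentially free action on $\partial^2 X$. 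For weak mixing of $\alpha$: the Patterson--Sullivan action $\Gamma \cc (\partial X,\nu)$ is a doubly ergodic $\Gamma$-boundary (Kaimanovich), so $\Gamma \cc (\partial^2 X,\bar\nu)$ is weakly mixing; passing to the cross-section $D$ preserves weak mixing of the induced cocycle into $\Gamma$.

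Next, construct the leafwise averaging family $\Omega=\{\omega_r\}_{r>0}$. The geodesic flow descends to a measurable first-return transformation $T$ on $(D,\bar\nu)$ with integrable return-time function $\tau:D\to \RR_{>0}$; by the $\mathrm{CAT}(-1)$ geometry, both $\tau$ and $\tau^{-1}$ are essentially bounded (or at least controlled) because $D$ is a bounded cross-section of a Gromov-hyperbolic geodesic flow. For each $r>0$ define $\omega_r$ to be the leafwise probability measure supported on $\{((b,c),T^n(b,c)) : S_n(b,c)\in[r-a,r+a]\}$, uniformly distributed, where $S_n=\tau+\tau\circ T+\cdots+\tau\circ T^{n-1}$ is the Birkhoff sum and $a$ is chosen larger than the essential oscillation of $\tau$. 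Because $T$ is an ergodic $\ZZ$-action on a standard probability space, the Birkhoff/Wiener theorem together with the Hopf-type argument (as in the free-group treatment in \S\ref{sec:free}) shows $\Omega$ is pointwise ergodic in $L^p$ for $p>1$ and in $L\log L$, and satisfies the strong $L^p$ and weak $(1,1)$ maximal inequalities.

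Finally, apply Theorem \ref{thm:general} with $K$ trivial and $\psi\equiv 1$ to obtain the family $\zeta_r$ on $\Gamma$ defined by
\[
\zeta_r(\gamma) = \int_D \sum_{(b',c'):\alpha((b',c'),(b,c))=\gamma} \omega_r((b',c'),(b,c))\, d\bar\nu(b,c),
\]
which is pointwise ergodic in $L^p$ for all $p>1$ and in $L\log L$. The support property follows from the comparison between the cocycle value $\alpha$ and the ambient metric: if $g(b,c)\in D$ for $(b,c)\in D$ with Birkhoff time $\approx r$, then the geodesic from $x$ to $gx$ shadows a geodesic in $X$ of length within a bounded additive error of $r$, so $d(e,g)=d_X(x,gx)\in[r-a',r+a']$ for some uniform $a'$. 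The main obstacle is the geometric Step 4, namely verifying that the first-return time is essentially bounded and that the cocycle is uniformly close to the displacement length; this is where one genuinely uses the $\mathrm{CAT}(-1)$ hypothesis (through the Morse property of geodesics and the thin-triangles lemma) rather than just word hyperbolicity, and it is the step that forces the extra technical work deferred to \cite{BN4}.
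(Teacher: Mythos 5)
Your global strategy (amenable boundary relation, cocycle into $\Gamma$, then Theorem \ref{thm:general}) is the right one, but the route through the double boundary breaks down at the construction of the leafwise family $\Omega$, and this gap is fatal. Within each $T$-orbit your $\omega_r$ is supported on the set of times $n$ with $S_n(b,c)\in[r-a,r+a]$; since the return time $\tau$ is bounded below, this is a \emph{bounded} number of consecutive orbit points located at Birkhoff time roughly $r$. These are moving averages with a window of fixed width, and such averages are not a pointwise ergodic family for a $\ZZ$-action: the strong $L^p$ and weak $(1,1)$ maximal inequalities fail (take $F$ the indicator of a small set $A$; by conservativity almost every orbit meets $A$, so the maximal function is at least $1/N$ almost everywhere while $\|F\|_p$ is as small as you like), and by the strong sweeping out property of bounded-window moving averages, a.e.\ convergence fails even for bounded functions. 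The Birkhoff/Wiener theorem you invoke governs the growing windows $\{0,1,\dots,n\}$ --- which is exactly what the paper uses in its double-boundary treatment of the free group, and is why that construction yields Ces\`aro averages of spheres rather than measures supported on annuli. There is an inherent tension in your plan: on a $\ZZ$-relation coming from (a first return of) the geodesic flow, annulus support forces bounded windows, and bounded windows kill pointwise convergence.

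The paper's proof sidesteps this by working with the \emph{single} boundary $\partial\Gamma$ with its Patterson--Sullivan measure, which is nonsingular of type $III_\lambda$; one passes to the Maharam extension and restricts to $\partial\Gamma\times[0,T)_L$. The equivalence classes there are horoshells, far larger than a single $\ZZ$-orbit, so the shell $\cS_{r,a}(\xi,t)=\cB_r(\xi,t)\setminus\cB_{r-a}(\xi,t)$ has cardinality growing exponentially in $r$, and pointwise ergodicity is obtained not from Birkhoff's theorem but from the regularity and asymptotic-invariance (F\o lner-type) criteria of \cite{BN2}. This also changes the weak-mixing input: since the boundary action has type $III_\rho$ and stable type $III_\tau$ with $\rho,\tau\in(0,1]$, the cocycle is weakly mixing only \emph{relative to a compact group} $K=\RR/T\ZZ$ or $\ZZ/N\ZZ$ (Theorems \ref{thm:typeIII} and \ref{previous}), so one cannot simply take $K$ trivial and $\psi\equiv1$ as you do. Two secondary problems with your write-up: identifying the restricted orbit relation on a cross-section $D\subset\partial^2 X$ with the orbit relation of a single first-return map is precisely the combinatorial miracle of the free-group case and is not automatic here; and essential boundedness of the return time requires the orbit $\Gamma x$ to come uniformly close to every geodesic, which needs cocompactness or a substitute not present in the hypotheses.
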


\subsubsection{A brief outline : the equivalence relation, the cocycle, and the weights}
Let $\partial\Gamma$ denote the Gromov boundary of $(\Gamma,d)$. Via the Patterson-Sullivan construction, there is a quasi-conformal probability measure $\nu$ on $\partial\Gamma$. So there are constants $C,\fh>0$ such that
$$C^{-1}\exp(-\fh h_\xi(g^{-1})) \le \frac{d\nu \circ g}{d\nu}(\xi) \le C \exp(-\fh h_\xi(g^{-1}))$$
for every $g\in \Gamma$ and a.e. $\xi \in \partial \Gamma$ where $h_\xi$ is the horofunction determined by $\xi$. To be precise,
$$h_\xi(g)=\lim_{n\to\infty} d(\xi_n,g)-d(\xi_n,e)$$
where $\{\xi_n\}_{n=1}^\infty$ is any sequence in $\Gamma$ which converges to $\xi$. Properties of CAT(-1) spaces ensure that this limit exists \cite{BH99}.

The {\em type} of the action $\Gamma \cc (\partial \Gamma,\nu)$ encodes the essential range of the Radon-Nikodym derivative, and  \cite{B2} it is shown that this type is $III_\lambda$ for some $\lambda \in (0,1]$. If $\lambda \in (0,1)$,  set
$$R_\lambda(g,\xi)= - \log_\lambda \left( \frac{d \nu \circ g}{d\nu}(\xi) \right)\,,$$
and  set $R_1(g,\xi) = +\log \left( \frac{d \nu \circ g}{d\nu}(\xi) \right)$. Using standard results, it can be shown that if $\lambda \in (0,1)$ then we can choose $\nu$ so that $R_\lambda(g,\xi) \in \ZZ$ for every $g$ and a.e. $\xi$. 

In order to handle each case uniformly, set $L=\RR$ if $\lambda=1$ and $L=\ZZ$ if $\lambda \in (0,1)$. Then let $\Gamma$ act on $\partial\Gamma \times L$ by
$$g(\xi,t) = (g \xi, t-R_\lambda(g,\xi)).$$
This action preserves the measure $\nu \times \theta_\lambda$ where $\theta_1$ is the measure on $\RR$ satisfying $d\theta_1(t) = \exp(\fh t) dt$ and, for $\lambda \in (0,1)$, $\theta_\lambda$ is the measure on $\ZZ$ satisfying $\theta_\lambda(\{n\}) = \lambda^{-n}$. 

Given $a,b \in L$, let $[a,b]_L \subset L$ denote the interval $\{a,a+1,\ldots, b\}$ if $L=\ZZ$ and $[a,b] \subset \RR$ if $L=\RR$. Similar considerations apply to open intervals and half-open intervals.

For any real numbers $r, T>0$, and $(\xi,t) \in \partial\Gamma\times [0,T)_L$, let 
$$\Gamma_r(\xi,t) = \{g \in \Gamma:~ d(g,e) - h_\xi(g)-t \le r, ~g^{-1}(\xi,t) \in \partial \Gamma \times [0,T)_L \}$$
and
$$\cB^{}_r(\xi,t) := \{ g^{-1} (\xi,t):~ g\in \Gamma_r(\xi,t) \}.$$
$\Gamma_r(\xi,t)$ is approximately equal to the intersection of the ball of radius $r$ centered at the identity with the horoshell $\{g \in \Gamma:~ -t\le h_\xi(g) \le T-t\}$.  Of course, $\Gamma_r$ and $\cB_r$ depend on $T$, but we leave this dependence implicit. 

The main steps in the proof of Theorem \ref{thm:hyp} are as follows 
\begin{enumerate}
\item 
The first main technical result is that if $T$ are sufficiently large then the subset family $\cB$ is {\em regular}: there exists a constant $C>0$ such that for every $r>0$ and a.e. $(\xi,t) \in \partial\Gamma \times [0,T)_L$,
$$| \cup_{s\le r} \cB^{-1}_s \cB_r (\xi,t)| \le C |\cB_r(\xi,t)|.$$
The weak (1,1) type maximal inequality for the family of uniform averages on $\cB_r(\xi,t)$ is then established using  the general results in \cite{BN2}. 

\item 
Next we let $\cS^{}_a=\{\cS^{}_{r,a}\}_{r>0}$ be the family of subset functions on $\partial\Gamma\times [0,T)_L$ defined by 
$$\cS^{}_{r,a}(\xi,t) := \cB_r(\xi,t) \setminus \cB_{r-a}(\xi,t)$$
and observe that $\cS_a$ is also regular if $a,T>0$ are sufficiently large. Therefore the family of uniform averages on $\cS_{r,a}(\xi,t)$ satisfies a weak (1,1) type maximal inequality.

The second main technical result is that $\cS_a$ is {\em asymptotically invariant}. More precisely we let $E$ denote the equivalence relation on $\partial\Gamma \times [0,T)_L$ given by $(\xi,t)E(\xi',t')$ if there exists $g \in \Gamma$ such that $g (\xi,t)=(\xi',t')$. Recall that $[E]$ denotes the full group of $E$ (that is, the group of all Borel isomorphisms on $\partial \Gamma \times [0,T)_L$ with graph contained in $E$), and $\cS_{r,a}$ being asymptotically invariant means that there exists a countable set $\Psi \subset [E]$ which generates the relation $E$ such that 
$$\lim_{r \to \infty} \frac{ |\cS_{r,a}(\xi,t) \vartriangle \psi(\cS_{r,a}(\xi,t))|}{|\cS_{r,a}(\xi,t)|} = 0$$
for a.e. $(\xi,t)$ and for every $\psi \in \Psi$.


It now follows from the general results of \cite{BN2} that the uniform averages over $\cS_{r,a}$ form a pointwise ergodic family $\Omega=\{\omega_r\}_{r>0}$ in $L^1$ for the equivalence relation $E$. 

\item We let $\alpha:E \to \Gamma$ be the cocycle given by the action of $\Gamma$. First we show that $\Gamma \cc (\partial\Gamma,\nu)$ is weakly mixing (so $\Gamma \cc (X\times \partial\Gamma, \mu \times \nu)$ is ergodic). This uses the fact that Poisson boundary actions are weakly mixing \cite{AL, Ka} and that the action on $(\partial\Gamma,\nu)$   is equivalent to a Poisson boundary action \cite{CM07}. From \cite{B2}, it follows that $\Gamma \cc (\partial\Gamma,\nu)$ has type $III_\rho$ and stable type $III_\tau$ for some $\rho, \tau \in (0,1]$. Therefore $\alpha$ is weakly mixing relative to a compact group action. So we can invoke Theorem \ref{thm:general} and Theorem \ref{thm:typeIII} of the present paper  (as well as Theorem \ref{previous} based on \cite{BN2}) and thereby conclude the proof.
\end{enumerate}

\end{document}